\documentclass[10pt, leqno]{amsart}

\usepackage{amsmath,amssymb,amsthm, epsfig}
\usepackage{hyperref}
\usepackage{amsmath}
\usepackage{amssymb}
\usepackage{color}
\usepackage{ulem}
\usepackage{epsfig}
\usepackage[mathscr]{eucal}

\usepackage[latin1]{inputenc}

%


\newtheorem{theorem}{Theorem}
\newtheorem{definition}{Definition}

\newtheorem{lemma}{Lemma}
\newtheorem{proposition}{Proposition}
\newtheorem{corollary}{Corollary}

\date{}
\numberwithin{equation}{section} \numberwithin{theorem}{section}
\numberwithin{lemma}{section} \numberwithin{corollary}{section}
\numberwithin{remark}{section} \numberwithin{proposition}{section}
\numberwithin{definition}{section}

\newcommand{\loc}{\operatorname{loc}}

\def \supp {\mathrm{supp } }

\def \loc {\mathrm{loc}}

\def \suchthat {\ \big | \ }

\newcommand{\pe}{E_{\varepsilon}}

\newcommand{\intav}[1]{\mathchoice {\mathop{\vrule width 6pt height 3 pt depth  -2.5pt
\kern -8pt \intop}\nolimits_{\kern -6pt#1}} {\mathop{\vrule width
5pt height 3  pt depth -2.6pt \kern -6pt \intop}\nolimits_{#1}}
{\mathop{\vrule width 5pt height 3 pt depth -2.6pt \kern -6pt
\intop}\nolimits_{#1}} {\mathop{\vrule width 5pt height 3 pt depth
-2.6pt \kern -6pt \intop}\nolimits_{#1}}}

\begin{document}

\title[Free boundary problem]{POROSITY OF THE FREE BOUNDARY FOR $p$-PARABOLIC TYPE EQUATIONS IN NON-DIVERGENCE FORM}

\author[G. C. Ricarte]{Gleydson C. Ricarte}
\address{Universidade Federal do Cear\'{a} - UFC, Department of Mathematics, Fortaleza - CE, Brazil - 60455-760.}
\email{ricarte@mat.ufc.br}

\begin{abstract}
In this article we establish the exact growth of the solution to the singular quasilinear $p$-parabolic free boundary problem in non-divergence form near the free boundary from which follows its porosity.

\bigskip

\noindent \textbf{Keywords:} Normalized $p$-Laplacian, singularly perturbed problems, Lipschitz regularity, porosity of the free boundary.

\bigskip

\noindent \textbf{AMS Subject Classifications MSC 2010:} 35K55, 35D40, 35B65, 35R35.

\end{abstract}

\maketitle

\section{Introduction}

In this paper we prove an uniform gradient estimate for solutions to  a one phase free boundary problem involving  singular $p$-parabolic type equations in non-divergence form. This estimate then allows us to obtain porosity of the free boundary.  More precisely, let $\Omega$ be an open bounded domain of $\mathbb{R}^n$, $n \ge 2$, $T>0$. We consider the problem: Find $0 \le u \in  C((0,T) \times \Omega)$ such that
 \begin{equation} \label{P1}
 \left\{
\begin{array}{rclcl}
\Delta^N_p u-u_t& = & f \cdot \chi_{\{u>0\}} & \mbox{in} & \Omega_T \\
u & = & \varphi & \mbox{on} & \partial_{p} \Omega_T,
\end{array}
\right.
\end{equation}
where $p \in (1,+\infty)$, $\Delta^N_p$ is the normalized $p$-Laplacian defined by
\begin{eqnarray*}
	\Delta^N_p u &\colon=& \frac{1}{p} |\nabla u|^{2-p} \textrm{div} \left( |\nabla u|^{p-2} \nabla u\right)\\
	&=&\frac{1}{p} \Delta u + \frac{p-2}{p} \Delta^N_{\infty} u,
\end{eqnarray*}
 and $\Delta^N_{\infty} u \colon = \left\langle D^2 u \frac{\nabla u}{|\nabla u|}, \frac{\nabla u}{|\nabla u|} \right \rangle$ is a normalized infinity Laplacian, $f : \Omega_T \rightarrow \mathbb{R}$ is a function non-increasing in $t$ and satisfying for two positive constants $c_0$ and $c_1$ 
\begin{equation} \label{E1}
0 < c_0 \le f(x,t) \le c_1 < \infty  \quad \textrm{in} \quad \Omega_T,
\end{equation}  
 $\varphi(x,0)=0$ in $\Omega$ and $\varphi$ is non-decreasing in $t$. We are interested in regularity theory and geometric properties of $\partial \{u>0\}$.  For this, we use the regularity theory of normalized $p$-Laplacian parabolic equations which was recently developed in \cite{AM,Does,Silv}. When we consider free boundary problems, optimal regularity results and sharp non-degeneracy are crucial for further analysis of the set $\partial \{u>0\}$. In this direction, just recently we have the work \cite{RMR}. 

Equation \eqref{P1} is the degenerate fully nonlinear version of the parabolic problem studied by H. Shahgholian \& G. Ricarte, J.M. Urbano and R. Teymurazyan  in \cite{Sha} and \cite{RMR} respectively. They proved optimal regularity and non-degeneracy estimates for the solution. As a consequence, they were able to obtain finite speed of propagation of the set $\partial \{u>0\}$ and also Hausdorff measure estimate of the free boundary $\partial \{u>0\}$ with respect to the parabolic metric.  

In analogy with \cite{RMR}, what we want  to prove is that, at each time level, the n-dimensional Lebesgue measure of the free boundary is zero because it is porous. We prove this by obtaining a non-degeneracy result and by controlling the growth rate of the solution near the free boundary. As in \cite{RMR}, the solution to \eqref{P1} is derived from an approximating family of functions, which are solutions to some Dirichlet problems. More precisely,   we consider the following singular perturbation problem 
\begin{equation}\label{Equation Pe} \tag{$\pe$}
\left\{
\begin{array}{rclcl}
 \mathcal{L} u^{\varepsilon} - \partial_t u_{\varepsilon} & = & \zeta_{\varepsilon}(u_{\varepsilon}) + f_{\varepsilon} & \mbox{in} & \Omega_T \\
u_{\varepsilon} & = & \varphi & \mbox{on} & \partial_{p} \Omega_T,
\end{array}
\right.
\end{equation}
where $\mathcal{L} v \colon= \frac{1}{p}\Delta v + \frac{p-2}{p} \Delta^{N}_{\infty} v$. The singularly perturbed potential $\zeta_{\varepsilon}(\cdot)$ is a suitable approximation of a multiple of the Dirac mass $\delta_0$. Our objective is to study the limit problem as $\varepsilon \to 0$ and analyze the free boundary in the context of geometric measure theory.  In the next section we give more details of our results and hypothesis.

\bigskip

\noindent{\bf Acknowledgments.}  GCR thanks the Analysis research group of UFC for fostering a pleasant and productive scientific atmosphere. The author research has been partially funded by FUNCAP-Brazil.

\section{Main results}

Let us now describe in more details our results and hypothesis. 

\subsection{Function spaces and notations}
Let us fix the notation of geometric quantities that we are going to use in this work. Given a bounded domain $\Omega \subset \mathbb{R}^n$, with a smooth boundary $\partial \Omega$, we define, for $T>0$, $\Omega_T = \Omega \times (0,T]$, its lateral boundary $\Sigma = \partial \Omega \times (0,T)$ and its parabolic boundary $\partial_p \Omega_T = \Sigma \cup (\Omega \times \{0\})$.  For $X_0 \in \mathbb{R}^n$, $t_0 \in \mathbb{R}$ and $\tau >0$, we denote
\begin{eqnarray*}
 B_{\tau}(X_0) &:=& \left\{x \in \mathbb{R}^{n} : |X-X_0| < \tau \right\}, \\
 Q_{\tau}(X_0,t_0) &:=& B_{\tau}(X_0) \times (t_0-\tau^2, t_0 + \tau^2),\\
 Q^{-}_{\tau}(X_0,t_0) &:=& B_{\tau}(X_0) \times (t_0-\tau^2, t_0],
\end{eqnarray*}
and, for a set $K \subset \mathbb{R}^{n+1}$ and $\tau >0$,
$$\mathcal{N}_{\tau}(K) := \bigcup_{(X_0,t_0) \in K} Q_{\tau}(X_0,t_0) \quad \mathrm{and} \quad \mathcal{N}^{-}_{\tau}(K) := \bigcup_{(X_0,t_0) \in K} Q^{-}_{\tau}(X_0,t_0).$$

Let's start the section this section defining viscosity solutions of equation $\mathcal{L} v - v_t =f(X,t)$ in $\Omega_T$ and fix the notation.  For $p \in (1,+\infty)$, the operator 
\begin{equation} \label{OP1}
\mathcal{L} v \colon=\frac{1}{p}\Delta v + \frac{p-2}{p} \Delta^N_{\infty} v
\end{equation}
 is singular at $\{\nabla u =0\}$ and so the definition uses USC and LSC envelopes of the operator, see \cite{CIL92}.  Afterwards, we give some remarks to enlighten some basic features of solutions.   To this end,  given a function $h$ defined in a set $\mathcal{D}$, we need to introduce its \textit{upper semicontinuous envelope $h^{\star}$} and \textit{lower semicontinuous envelope $h_{\star}$} defined by
\begin{eqnarray*}
	h^{\star}(x,t) &\colon=& \lim_{r \downarrow 0} \sup \{h(y,t) : (y,t) \in \overline{Q_r(x,t)} \cap \mathcal{D}\}\\
	h_{\star}(x,t) &\colon=& \lim_{r \downarrow 0} \inf \{h(y,t) : (y,t) \in \overline{Q_r(x,t)} \cap \mathcal{D}\}\\
\end{eqnarray*}
as functions defined in $\overline{\mathcal{D}}$. The definition for viscosity solutions is the following.

\begin{definition}\label{Def1}
A continuous function $v$ is a viscosity subsolution (supersolution)  to the equation
\begin{equation} \label{OP2}
	\mathcal{L} v- v_t = g(X,t) \quad \textrm{in} \quad \Omega_T,
\end{equation}
if and only if  for all $(X_0,t_0) \in \Omega_T$ and  $\phi \in C^2(\Omega_T)$ such that $u-\phi$ attains a local maximum (minimum)  at $(X_0,t_0)$, then
\begin{enumerate}
\item $\mathcal{L} \phi(X_0,t_0) - \phi_t(X_0,t_0) \ge   g(X_0,t_0) \quad (\textrm{resp.} \,\, \le )$ if $\nabla \phi(X_0,t_0) \not= 0$
\\
\item$ \Delta \phi(X_0,t_0) + (p-2)\lambda_{\textrm{max}}(D^2 \phi(X_0,t_0))-\phi_{t}(X_0,t_0) \ge g(X_0,t_0)  \quad (\textrm{resp.} \,\, \le )$  if $\nabla \phi(X_0,t_0)=0$ and $p \ge 2$
\\
\item$ \Delta \phi(X_0,t_0) + (p-2)\lambda_{\textrm{min}}(D^2 \phi(X_0,t_0))-\phi_{t}(X_0,t_0) \ge g(X_0,t_0)  \quad (\textrm{resp.} \,\, \le ) $  if $\nabla \phi(X_0,t_0)=0$ and $1 < p <2$.
\end{enumerate}
We say that $u$ is a viscosity solution  of \eqref{OP2} in $\Omega_T$ if it is both a viscosity sub- and supersolution. 
\end{definition}

We will use the following two properties of the viscosity solutions of \eqref{OP2}.   
The first one is the comparison principle, which can be found in Theorem 3.2 in \cite{BGG}. 

\begin{proposition} \label{Prop1}
Let  $u$ lower-semicontinuous and $v$ upper-semicontinuous. Suppose that $v$ is a subsolution, and $u$ a supersolution to \eqref{OP2} with $g \in C(\overline{Q_1})$, $g >0$ and $1 \le p \le \infty$. Further, suppose that $v \le u$ on $\partial_p Q_1$. Then $v \le u$ in $\overline{Q_1}$.
\end{proposition}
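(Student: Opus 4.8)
The plan is to run the standard viscosity comparison argument --- doubling of variables together with the parabolic theorem on sums, see \cite{CIL92} --- for the degenerate elliptic operator $\mathcal{L}$, the only real work being to accommodate the singularity of $\mathcal{L}$ at $\{\nabla\cdot=0\}$, which is exactly what the case split in Definition~\ref{Def1} handles; this is the argument of Theorem~3.2 in \cite{BGG}, which we outline. \emph{Reduction to a strict subsolution with interior maximum.} Suppose, for contradiction, that $m:=\sup_{\overline{Q_1}}(v-u)>0$. Write $Q_1=B_1(X_0)\times(t_0-1,t_0+1)$ and, for small $\sigma>0$, set $v_\sigma(X,t):=v(X,t)-\sigma/(t_0+1-t)$. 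Since $\mathcal{L}$ annihilates functions of $t$ alone while $-\partial_t$ contributes $-\sigma/(t_0+1-t)^2<0$, $v_\sigma$ is a viscosity subsolution of $\mathcal{L}w-w_t=g+\sigma/(t_0+1-t)^2$, i.e.\ a strict one (with $g>0$ this strictness is already available, but the perturbation is still needed to push the maximum off the top face $\{t=t_0+1\}$). As $t\uparrow t_0+1$ one has $v_\sigma\to-\infty$, and $v_\sigma\le v\le u$ on $\partial_p Q_1$, so for $\sigma$ small $m_\sigma:=\sup_{\overline{Q_1}}(v_\sigma-u)$ is still positive and is attained at an interior point $(\hat X,\hat t)$ with $\hat t<t_0+1$. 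We derive a contradiction for each fixed small $\sigma$ and then let $\sigma\downarrow0$.

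\emph{Doubling and the theorem on sums.} For $\varepsilon>0$ let $\Phi_\varepsilon(X,t,Y,s):=v_\sigma(X,t)-u(Y,s)-\frac{|X-Y|^2}{2\varepsilon}-\frac{(t-s)^2}{2\varepsilon}$ and let $(X_\varepsilon,t_\varepsilon,Y_\varepsilon,s_\varepsilon)$ maximize $\Phi_\varepsilon$ over $\overline{Q_1}\times\overline{Q_1}$. The standard estimates give $\big(|X_\varepsilon-Y_\varepsilon|^2+(t_\varepsilon-s_\varepsilon)^2\big)/\varepsilon\to0$, $\Phi_\varepsilon\to m_\sigma$, and $(X_\varepsilon,t_\varepsilon),(Y_\varepsilon,s_\varepsilon)\to(\hat X,\hat t)$, so for $\varepsilon$ small all four points are interior. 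Setting $p_\varepsilon:=(X_\varepsilon-Y_\varepsilon)/\varepsilon$ and $\hat p_\varepsilon:=p_\varepsilon/|p_\varepsilon|$ when $p_\varepsilon\ne0$, the parabolic theorem on sums produces a number $a$ and symmetric matrices $Z,W$ such that $(a,p_\varepsilon,Z)$ lies in the closure of the parabolic second-order superjet of $v_\sigma$ at $(X_\varepsilon,t_\varepsilon)$, $(a,p_\varepsilon,W)$ in the closure of the parabolic second-order subjet of $u$ at $(Y_\varepsilon,s_\varepsilon)$ (the time derivatives coincide since the time penalty is symmetric), and $\begin{pmatrix}Z&0\\0&-W\end{pmatrix}\le\frac3\varepsilon\begin{pmatrix}I&-I\\-I&I\end{pmatrix}$; pairing this with vectors $(\xi,\xi)$ gives $Z\le W$.

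\emph{Case split on $p_\varepsilon$ and conclusion.} The operator $\mathcal{L}$ and the two operators $M\mapsto\mathrm{tr}\,M+(p-2)\lambda_{\max}(M)$, $M\mapsto\mathrm{tr}\,M+(p-2)\lambda_{\min}(M)$ occurring in Definition~\ref{Def1} are all nondecreasing in $M$ for $p\ge1$ --- a one-line consequence of $0\le\langle A\hat q,\hat q\rangle,\ \lambda_{\max}(A),\ \lambda_{\min}(A)\le\mathrm{tr}\,A$ for symmetric $A\ge0$, i.e.\ the degenerate ellipticity of the normalized $p$-Laplacian. If $p_\varepsilon\ne0$, branch (1) of Definition~\ref{Def1} applies to $v_\sigma$ and to $u$ with the \emph{same} gradient $p_\varepsilon$, and subtracting the two viscosity inequalities gives
\[
\frac{\sigma}{(t_0+1-t_\varepsilon)^2} \le \frac1p\big[\mathrm{tr}(Z-W)+(p-2)\langle(Z-W)\hat p_\varepsilon,\hat p_\varepsilon\rangle\big]+g(Y_\varepsilon,s_\varepsilon)-g(X_\varepsilon,t_\varepsilon) \le g(Y_\varepsilon,s_\varepsilon)-g(X_\varepsilon,t_\varepsilon),
\]
the last inequality by $Z\le W$ and the ellipticity just recalled. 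If $p_\varepsilon=0$, branch (2) (when $p\ge2$) or branch (3) (when $1<p<2$) is used; the same manipulation, with $\lambda_{\max}$ resp.\ $\lambda_{\min}$ in place of the infinity-Laplacian term, again bounds $\sigma/(t_0+1-t_\varepsilon)^2$ from above by $g(Y_\varepsilon,s_\varepsilon)-g(X_\varepsilon,t_\varepsilon)$. Since $g$ is uniformly continuous on $\overline{Q_1}$ and $|X_\varepsilon-Y_\varepsilon|+|t_\varepsilon-s_\varepsilon|\to0$ while $t_\varepsilon\to\hat t<t_0+1$, the left-hand side stays bounded below by a positive constant while the right-hand side tends to $0$ --- a contradiction. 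Hence $m_\sigma\le0$, i.e.\ $v_\sigma\le u$ on $\overline{Q_1}$, and letting $\sigma\downarrow0$ yields $v\le u$ on $\overline{Q_1}$. (The extreme exponents $p=1,\infty$ are covered by the evident limiting form of these branches.)

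\emph{Expected main obstacle.} Everything apart from the singular branch is routine. The delicate point is that $q\mapsto q/|q|$ is discontinuous at the origin, so one cannot merely invoke continuity of the symbol of $\mathcal{L}$ in $(q,M)$; the argument goes through because the theorem on sums furnishes the \emph{same} $p_\varepsilon$ in both jets --- so when $p_\varepsilon\ne0$ the two infinity-Laplacian directions agree and the subtraction is legitimate --- and because Definition~\ref{Def1} uses, at $p_\varepsilon=0$, the \emph{same} branch ($\lambda_{\max}$ for $p\ge2$, $\lambda_{\min}$ for $1<p<2$) for sub- and supersolution, so the two monotone nonlinearities cancel against $Z\le W$ with the right sign. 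Checking that there is no mismatch here, and that the strict-subsolution perturbation is consistent with the semicontinuity hypotheses, is the only step requiring genuine care.
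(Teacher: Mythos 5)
The paper does not actually prove this proposition: it is quoted verbatim from \cite{BGG} (Theorem 3.2 there), so your outline has to be measured against that reference's argument. Your skeleton is the right one --- the $\sigma/(t_0+1-t)$ perturbation, doubling of variables, parabolic theorem on sums, and a case split according to whether the doubled gradient $p_\varepsilon$ vanishes --- and your treatment of the branch $p_\varepsilon\neq0$ is correct. But the singular case $p_\varepsilon=0$ (equivalently $X_\varepsilon=Y_\varepsilon$), which you yourself flag as the only delicate step, is not resolved correctly, and this is precisely where the cited proofs do something different.

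Two concrete problems. (a) The theorem on sums gives $(a,p_\varepsilon,Z)$ and $(a,p_\varepsilon,W)$ only in the \emph{closures} of the parabolic jets. A closure element with zero gradient is in general a limit of jets $(a_k,q_k,W_k)$ with $q_k\neq0$, for which only branch (1) of Definition \ref{Def1} is available; in the limit $\langle W_k\hat q_k,\hat q_k\rangle$ only subconverges to some $c\in[\lambda_{\min}(W),\lambda_{\max}(W)]$, so the surviving supersolution inequality (say for $p\ge2$) is the one with $\lambda_{\min}(W)$, not $\lambda_{\max}(W)$. Hence the ``same branch on both sides'' cancellation you invoke is not legitimately available: what you can actually estimate is $\mathrm{tr}(Z-W)+(p-2)\bigl(\lambda_{\max}(Z)-\lambda_{\min}(W)\bigr)$, and $Z\le W$ does \emph{not} make this $\le0$ (take $Z=W=\mathrm{diag}(1,-1)$ and $p>2$). (b) Even if you avoid jets and test directly with the doubling function (which is legitimate, since at a maximum of $\Phi_\varepsilon$ the penalizer is a genuine $C^2$ test function for each of $v_\sigma$ and $u$), its spatial Hessian is $\pm I/\varepsilon$, and branches (2)/(3) then give $(n+p-2)/\varepsilon - a \ge g(X_\varepsilon,t_\varepsilon)+\sigma\text{-term}$ and $-(n+p-2)/\varepsilon - a \le g(Y_\varepsilon,s_\varepsilon)$, whose difference is dominated by $2(n+p-2)/\varepsilon$ on the favorable side: no contradiction. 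This is exactly why comparison proofs for normalized $p$-Laplacian-type singular equations (in \cite{BGG}, \cite{Does}, \cite{Silv}, following the Ohnuma--Sato device) replace $|X-Y|^2/(2\varepsilon)$ by a penalization vanishing to higher order, e.g. $|X-Y|^4/(4\varepsilon)$: then at $X_\varepsilon=Y_\varepsilon$ the spatial gradient \emph{and} Hessian of the test function vanish, the singular branches reduce to $-a\ge g(X_\varepsilon,t_\varepsilon)+\sigma\text{-term}$ and $-a\le g(Y_\varepsilon,s_\varepsilon)$, and continuity of $g$ gives the contradiction, while for $X_\varepsilon\neq Y_\varepsilon$ your branch-(1) computation goes through verbatim with the new gradient. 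With that modification (plus the routine remark needed to include the top face $t=t_0+1$ in the conclusion for merely semicontinuous $u,v$), your argument becomes the standard proof behind the cited theorem.
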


The second one is the stability of viscosity solutions of \eqref{OP2}.

\begin{proposition}[Stability]\label{Prop2}
Let $\{u_k\}$ be a sequence of viscosity solution of \eqref{OP2} in $Q_{1}$, $u_{k}$ converge locally uniformly to $u$ and $g_k \to g$ locally uniformly in $Q_1$. Then $u$ is a viscosity solution of \eqref{OP2} in $Q_1$.
\end{proposition}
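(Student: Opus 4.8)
The plan is to run the classical touching-point (perturbed test function) argument for stability of viscosity solutions, with extra care at critical points of the test function, since there $\mathcal{L}$ is defined only through the semicontinuous envelopes encoded in Definition~\ref{Def1}. To begin, $u$ is continuous, being a locally uniform limit of continuous functions, so it is an admissible candidate in Definition~\ref{Def1}. By symmetry it is enough to show that $u$ is a viscosity supersolution of \eqref{OP2} in $Q_1$, the subsolution case being obtained analogously with the roles of maxima/minima and of the inequalities interchanged. Fix $(X_0,t_0)\in Q_1$ and $\phi\in C^2(Q_1)$ such that $u-\phi$ attains a local minimum at $(X_0,t_0)$. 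Replacing $\phi$ by $\tilde\phi(X,t):=\phi(X,t)-|X-X_0|^4-(t-t_0)^2$ we may assume that the minimum is strict on a closed cylinder $\overline{Q_\rho(X_0,t_0)}\subset Q_1$; since $\nabla\tilde\phi$, $D^2\tilde\phi$ and $\partial_t\tilde\phi$ agree with those of $\phi$ at $(X_0,t_0)$, any inequality proved for $\tilde\phi$ at $(X_0,t_0)$ is the one we want for $\phi$.

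The strictness of the minimum together with the local uniform convergence $u_k\to u$ yields, by the usual argument, points $(X_k,t_k)\in Q_\rho(X_0,t_0)$ with $(X_k,t_k)\to(X_0,t_0)$ and $u_k(X_k,t_k)\to u(X_0,t_0)$ at which $u_k-\tilde\phi$ has a local minimum. We test the supersolution $u_k$ (with right-hand side $g_k$) against $\tilde\phi$ at $(X_k,t_k)$ and let $k\to\infty$, using $g_k\to g$ locally uniformly. If $\nabla\phi(X_0,t_0)\neq 0$ then $\nabla\tilde\phi(X_k,t_k)\neq 0$ for all large $k$ and $\mathcal{L}\tilde\phi$ is continuous near $(X_0,t_0)$, so item~(1) of Definition~\ref{Def1} gives $\mathcal{L}\tilde\phi(X_k,t_k)-\partial_t\tilde\phi(X_k,t_k)\le g_k(X_k,t_k)$, whence in the limit $\mathcal{L}\phi(X_0,t_0)-\partial_t\phi(X_0,t_0)\le g(X_0,t_0)$, as required.

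The case $\nabla\phi(X_0,t_0)=0$ is the delicate one and is the main obstacle: along the sequence $\nabla\tilde\phi(X_k,t_k)$ need not vanish, and when it does not, $\Delta^N_\infty\tilde\phi(X_k,t_k)=\langle D^2\tilde\phi(X_k,t_k)e_k,e_k\rangle$, with $e_k:=\nabla\tilde\phi(X_k,t_k)/|\nabla\tilde\phi(X_k,t_k)|$, may fail to converge. Passing to a subsequence we distinguish two possibilities. If $\nabla\tilde\phi(X_k,t_k)=0$ for every $k$, then item~(2) (for $p\ge2$) or item~(3) (for $1<p<2$) of Definition~\ref{Def1} holds at $(X_k,t_k)$, and we pass to the limit at once, using the continuity of $M\mapsto\lambda_{\textrm{max}}(M)$ and $M\mapsto\lambda_{\textrm{min}}(M)$. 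If instead $\nabla\tilde\phi(X_k,t_k)\neq 0$ for every $k$, then item~(1) gives $\tfrac1p\Delta\tilde\phi(X_k,t_k)+\tfrac{p-2}{p}\langle D^2\tilde\phi(X_k,t_k)e_k,e_k\rangle-\partial_t\tilde\phi(X_k,t_k)\le g_k(X_k,t_k)$; extracting a further subsequence with $e_k\to e_0$, $|e_0|=1$, and using the bounds $\lambda_{\textrm{min}}(M)\le\langle Me_0,e_0\rangle\le\lambda_{\textrm{max}}(M)$ together with the sign of $p-2$, the limiting inequality implies the relevant alternative in items~(2)--(3) of Definition~\ref{Def1} for $\phi$ at $(X_0,t_0)$. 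In all cases $u$ satisfies the supersolution requirements, completing the proof.
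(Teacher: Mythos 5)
The paper records Proposition~\ref{Prop2} as a known fact and gives no proof, so there is no ``paper route'' to compare against; your argument is the classical perturbed test-function stability proof, and its structure is the right one: continuity of the limit, reduction to the supersolution half, a strict minimum obtained by subtracting $|X-X_0|^4+(t-t_0)^2$ (which does not change $\nabla\phi$, $D^2\phi$, $\partial_t\phi$ at $(X_0,t_0)$), touching points $(X_k,t_k)\to(X_0,t_0)$, and, at a critical point of $\phi$, the dichotomy along a subsequence between $\nabla\tilde\phi(X_k,t_k)=0$ for all $k$ and $\nabla\tilde\phi(X_k,t_k)\neq0$ for all $k$. This is exactly the standard way the singularity of $\mathcal{L}$ is handled.

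One step must be made explicit, because it is the only genuinely delicate point and, as written, it leans on a reading of Definition~\ref{Def1} that differs from its literal text. In the supersolution case with $\nabla\phi(X_0,t_0)=0$ and $\nabla\tilde\phi(X_k,t_k)\neq0$, passing to the limit in item~(1) gives $\tfrac1p\Delta\phi(X_0,t_0)+\tfrac{p-2}{p}\langle D^2\phi(X_0,t_0)e_0,e_0\rangle-\partial_t\phi(X_0,t_0)\le g(X_0,t_0)$. For $p\ge2$ this yields, via $\langle D^2\phi\,e_0,e_0\rangle\ge\lambda_{\min}(D^2\phi)$ and $\tfrac{p-2}{p}\ge0$, the inequality with $\lambda_{\min}$, and for $1<p<2$ the one with $\lambda_{\max}$ --- i.e.\ the lower semicontinuous envelope of the operator, which is the standard supersolution requirement at critical points and the convention under which the comparison principle of \cite{BGG} and the stability results in \cite{Does} are formulated. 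It does \emph{not} yield the inequality with $\lambda_{\max}$ (resp.\ $\lambda_{\min}$) that items~(2)--(3) display verbatim with ``(resp.\ $\le$)'' for supersolutions: that estimate runs in the opposite direction, so under the literal printed definition your sentence ``the limiting inequality implies the relevant alternative in items~(2)--(3)'' would be false. The fix is simply to state that Definition~\ref{Def1} is to be understood in the usual envelope sense, with the eigenvalue switched between sub- and supersolutions (and with the factor $1/p$ restored in items~(2)--(3)); with that convention spelled out, both subcases of your critical-point analysis, and the subsolution half by the symmetric argument, are correct.
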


We need to clarify what is a Lipschitz function defined in a space-time domain.

\begin{definition}
Let $\mathcal{D}\subset \mathbb{R}^{n} \times  \mathbb{R}$. We say that $v\in\textrm{Lip}_{\loc}(1,1/2)(\mathcal{D})$ if, for every compact $K \Subset \mathcal{D}$, there exists a constant $C=C(K)$ such that
$$
|v(x,t) - v(y,s)| \le C \left(|x-y| + |t-s|^{\frac{1}{2}}\right),
$$
for every $(x,t), (y,s) \in K$.  If the constant $C$ does not depend on the set $K$ we say $v \in \textrm{Lip}(1,1/2)(\mathcal{D})$.
\end{definition}
We also define the $\textrm{Lip}(1,1/2)(\mathcal{D})$ seminorm in $\mathcal{D}$
$$
[v]_{\textrm{Lip}(1,1/2)(\mathcal{D})} := \sup_{(x,t),(y,s) \in \mathcal{D}} \frac{|v(x,t)-v(y,s)|}{|x-y| + |t-s|^{1/2}}
$$
and the $\textrm{Lip}(1,1/2)(\mathcal{D})$ norm in $\mathcal{D}$
$$
\|v\|_{\textrm{Lip}(1,1/2)(\mathcal{D})} := \|v\|_{L^{\infty}(\mathcal{D})} + [v]_{\textrm{Lip}(1,1/2)(\mathcal{D})}.
$$
\subsection{Hypothesis}

Throughout the paper, for the study of problem \eqref{Equation Pe}
\begin{itemize}
\item $f_{\varepsilon} (x,t) \in C^{1,\alpha}(\overline{{\Omega}_T})$, is non-increasing in $t$ and satisfies
\begin{equation} \label{E1}
0 < c_0 \le f_\varepsilon(x,t) \le c_1 < \infty  \quad \textrm{in} \quad \Omega_T
\end{equation}
and
\begin{equation} \label{E2}
\|\nabla f_{\varepsilon}\|_{\infty} \leq C.
\end{equation}
\item The Dirichlet data $0\leq \varphi (x,t) \in C^{1,\alpha}(\partial_p\Omega_T)$, is non-decreasing in $t$ and satisfies $\varphi(x,0)=0$.
\end{itemize}

These are the same hypothesis assumed in \cite{RMR}.

\subsection{Existence and optimal regularity}

Our existence theorem relies on a singularly perturbed analysis. To this end, we shall define the perturbed term $\zeta_{\varepsilon} : \mathbb{R}_{+} \rightarrow \mathbb{R}_{+}$ satisfying
$$
	0 \le \zeta_{\varepsilon}(s) \le \frac{1}{\varepsilon} \chi_{(0,\varepsilon)}(s), \quad \forall \, s \in \mathbb{R}_{+}
$$
and the corresponding problem \eqref{Equation Pe}.  For example, it can be built as an approximation of unity
$$
	\zeta_{\varepsilon}( s) := \frac{1}{\varepsilon} \zeta \left(\frac{s}{\varepsilon}\right),
$$
where  $\zeta$ is a nonnegative smooth real function with $\supp ~\zeta = [0,1]$, such that
$$
\|\zeta\|_\infty\le 1 \quad \textrm{and} \quad \int_{\mathbb{R}} \zeta (s) \, ds < \infty.
$$

This existence result is based on Perron's method. We state before following theorem independently
of the \eqref{Equation Pe} context, since it may be of independent interest.

\begin{proposition}[Perron's Method] \label{Perron} 
Let $g: [0, \infty) \to \mathbb{R} $ be a bounded and Lipschitz function and $\mathcal{F}: \Omega_T \times  \mathbb{R}^n \times Sym(n) \to \mathbb{R}$ a degenerate elliptic operator satisfying the following monotonicity condition 
\begin{equation}\label{monot}
\mathcal{F}(x,t,\vec{p},N) \leq \mathcal{F}(x,t,\vec{p},M) \quad \mbox{whenever} \quad N \leq M,
\end{equation}
for any $\vec{p} \in \mathbb{R}^n$ and $N,M \in Sym(n)$. If the equation 
\begin{equation}\label{perroneq}
	u_t- \mathcal{F}(x,t, \nabla u,D^2 u) = g(u) + f(X,t)
\end{equation}
with $f \in C(\Omega_T)$, admits subsolution and supersolution  $\underline{u}, \overline{u} \in C^{0}(\overline{\Omega_T})$ respectively, and $\underline{u}=\overline{u} = \varphi \in W^{2,\infty}(\partial_p\Omega_T)$, then given the set of functions
$$
	\mathscr{S} := \left \{ w \in C(\overline{\Omega_T}) \suchthat w \text{ is a supersolution to } \eqref{perroneq}, \text{ and } \underline{u} \le w \le \overline{u} \right \}, 
$$ 
the function
\begin{equation}\label{inf}
	v(x,t) := \inf_{w \in \mathscr{S}} w(x,t) 
\end{equation}
is a continuous viscosity solution to \eqref{perroneq}, safisfying $u=\varphi$ in $\partial_p \Omega_T$.  
\end{proposition}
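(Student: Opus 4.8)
\medskip
\noindent\textbf{Proof proposal.}
The plan is to run Ishii's adaptation of Perron's method, adjusted to the parabolic, singular and semilinear features of \eqref{perroneq}. First I would record the elementary facts: $\overline u\in\mathscr S$, so $\mathscr S\neq\emptyset$, every $w\in\mathscr S$ obeys $\underline u\le w\le\overline u$, and therefore $v$ in \eqref{inf} is well defined with $\underline u\le v\le\overline u$; being bounded and an infimum of continuous (hence upper semicontinuous) functions, $v$ is upper semicontinuous, i.e. $v=v^{\star}$. Since the right-hand side of \eqref{perroneq} depends on the unknown through $g(u)$, I would first perform a harmless reduction: $\mathcal L$ is positively $1$-homogeneous in $(\nabla u,D^{2}u)$ and $g$ is Lipschitz, so the substitution $u=e^{\lambda t}\tilde u$ with $\lambda>[g]_{\mathrm{Lip}}$ turns \eqref{perroneq} into an equation whose zeroth-order dependence on $\tilde u$ is strictly increasing — the standard setting in which comparison (in the form of Proposition \ref{Prop1}) and the closure of sub/supersolutions are available. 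Equivalently, one keeps $g(u)$ and, since all steps are local, absorbs its variation — at most $[g]_{\mathrm{Lip}}$ times the local oscillation of $u$ — into the strict inequalities below.

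\medskip
\noindent\emph{Step 1: the lower envelope is a supersolution, with the right trace.} I would prove the usual lemma that the lower semicontinuous envelope of an infimum of supersolutions is again a supersolution: given $\phi\in C^{2}$ touching $v_{\star}$ from below at an interior $(X_{0},t_{0})$, pick $w_{k}\in\mathscr S$ and $(X_{k},t_{k})\to(X_{0},t_{0})$ with $w_{k}(X_{k},t_{k})\to v_{\star}(X_{0},t_{0})$, relocate the contact point onto each $w_{k}$ by a vanishing quadratic correction, and pass to the limit in the supersolution inequalities as in Proposition \ref{Prop2}, respecting the envelope formulation of Definition \ref{Def1} when $\nabla\phi(X_{0},t_{0})=0$ (items (2)--(3), per $p\ge2$ or $1<p<2$). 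This gives $\underline u\le v_{\star}$ a viscosity supersolution in $\Omega_{T}$. For the boundary: from $\underline u=\overline u=\varphi\in W^{2,\infty}(\partial_{p}\Omega_{T})$ and the boundedness of $g$ and $f$, I would build at each boundary point local barriers of the form $\varphi(\,\cdot\,)\pm K\,(\text{a smooth function vanishing there})$, squeeze $v$ between them, and conclude that $v$ is continuous up to $\partial_{p}\Omega_{T}$ with $v_{\star}=v^{\star}=\varphi$ there.

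\medskip
\noindent\emph{Step 2: the upper envelope $v=v^{\star}$ is a subsolution.} Arguing by contradiction, suppose there are an interior $(\hat X,\hat t)$ and $\phi\in C^{2}$ with $v^{\star}-\phi$ having a strict local maximum $0$ at $(\hat X,\hat t)$ while the subsolution inequality of Definition \ref{Def1} for the operator in \eqref{perroneq} fails there with a strict gap $\theta>0$. Using continuity of $\mathcal F$ off the singular set and the Lipschitz bound on $g$, I would choose a small cylinder $Q_{\rho}=Q_{\rho}(\hat X,\hat t)$ on which $\phi_{\delta}:=\phi-\delta$ is still a supersolution of \eqref{perroneq} for every small $\delta>0$, and on whose outer shell $\overline{Q_{\rho}}\setminus Q_{\rho/2}$ one has $v^{\star}-\phi\le-c_{\rho}<0$; then with $\delta\in(0,\min\{c_{\rho},\theta/(4[g]_{\mathrm{Lip}})\})$ the function $W:=\min\{v,\phi_{\delta}\}$ on $Q_{\rho}$, $W:=v$ elsewhere, coincides with $v$ on the outer shell (there $\phi_{\delta}\ge\phi-c_{\rho}\ge v^{\star}\ge v$), hence glues into a supersolution of \eqref{perroneq} in $\Omega_{T}$; moreover $\underline u\le\phi_{\delta}$ in $Q_{\rho}$ by comparison, so $\underline u\le W\le\overline u$ and $W=\varphi$ on $\partial_{p}\Omega_{T}$. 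Thus $W$ competes in the infimum defining $v$ (as usual one lets this infimum range over all supersolutions trapped between $\underline u$ and $\overline u$ with data $\varphi$, continuous or not — the value coincides with $v$ once continuity is established). But choosing $(X_{k},t_{k})\to(\hat X,\hat t)$ with $v(X_{k},t_{k})\to v^{\star}(\hat X,\hat t)=\phi(\hat X,\hat t)$ gives $W(X_{k},t_{k})=\phi_{\delta}(X_{k},t_{k})<v(X_{k},t_{k})$ for large $k$, contradicting minimality. Hence $v$ is a subsolution. Since $v_{\star}$ is a supersolution and $v^{\star}=v_{\star}=\varphi$ on $\partial_{p}\Omega_{T}$, comparison yields $v^{\star}\le v_{\star}$ on $\overline{\Omega_{T}}$; with $v_{\star}\le v\le v^{\star}$ this forces $v=v_{\star}=v^{\star}\in C(\overline{\Omega_{T}})$, a viscosity solution of \eqref{perroneq} with $v=\varphi$ on $\partial_{p}\Omega_{T}$.

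\medskip
\noindent I expect Step 2 to be the main obstacle, precisely because of the singularity of the operator at $\nabla u=0$: the bump construction has to be executed inside the semicontinuous-envelope formulation of Definition \ref{Def1}, separating $\nabla\phi(\hat X,\hat t)\neq0$ from $\nabla\phi(\hat X,\hat t)=0$ (and, in the latter, $p\ge2$ from $1<p<2$), checking in each case that $\phi-\delta$ keeps the supersolution property on $Q_{\rho}$ and that $W$ remains a supersolution across $\partial Q_{\rho}$. Constructing boundary barriers for a singular operator in Step 1 is the other delicate point; the semilinear term $g(u)$, by contrast, is only a minor nuisance, handled uniformly by the Lipschitz bound on $g$.
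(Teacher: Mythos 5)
Your proposal is correct and follows essentially the same route as the paper, which gives no details of its own but simply invokes the comparison principle (Proposition \ref{Prop1}) and defers to \cite[Theorem 3.1]{RMR}, i.e.\ precisely the standard Ishii--Perron scheme you reconstruct (lower envelope of the infimum is a supersolution, bump construction for the subsolution property, boundary barriers from $\varphi\in W^{2,\infty}$, and comparison to force $v^{\star}=v_{\star}$). Your parenthetical fix of enlarging the admissible class to possibly discontinuous supersolutions (and identifying the two infima a posteriori by comparison) is indeed the right way to handle the fact that the bumped competitor $\min\{v,\phi-\delta\}$ need not be continuous, a point the paper's one-line proof leaves implicit.
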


\begin{proof}
Using Proposition \ref{Prop1}, the proof follows exactly as the one of \cite[Theorem 3.1]{RMR}.
\end{proof}

In order to prove the existence result of the problem \eqref{Equation Pe}, we
choose to approximate the equation \eqref{Equation Pe} with a regularized problem. For $\delta >0$, let $u_{\varepsilon,\delta}$ be smooth  and satisfying that
 \begin{equation}\label{sol3}
     \left \{
        \begin{array}{rclcl}
            a^{\delta,p}_{ij}(\nabla u)u_{ij} - u_t&=& \zeta_{\varepsilon}(u) + f_{\varepsilon} & \textrm{ in }& \Omega_T \\
            u(x,0)&=& \varphi\ & \textrm{ on } & \partial_p \Omega_T,
        \end{array}
    \right.
\end{equation}
 where  
 \begin{equation} \label{coef}
 	a^{\delta,p}_{ij}(\vec \eta) \colon= \frac{1}{p} \delta_{ij} + \frac{p-2}{p} \frac{\eta_i \eta_j}{|\vec \eta|^2 + \delta}.
 \end{equation}
   Note that, $(a^{\delta,p}_{ij})_{i,j}$ is uniformly parabolic with
$$
	\lambda |\xi|^2 \le a^{\delta,p}_{ij}(\vec \eta) \xi_i \xi_j \le \Lambda |\xi|^2, 
$$
where $\Lambda = \max \left( \frac{p-1}{p}, \frac{1}{p}\right)$ and $\lambda = \min \left(\frac{p-1}{p}, \frac{1}{p}\right)$. 
 Since $p >1$, it follows from classical quasilinear equation theory (see e.g. \cite[Theorem 4.4]{O}) and the Schauder estimates that, for each $\delta>0$, there exists a unique solution $u_{\varepsilon,\delta} \in C^{3,\beta}(\Omega_T) \cap C(\overline{\Omega_T})$ of \eqref{sol3} with $p>1$. 
\begin{theorem}
Let $u_{\varepsilon, \delta} : \overline{\Omega} \times (0,T) \rightarrow \mathbb{R}$ be a viscosity solution of the regularized problem \eqref{sol3}. Then there exists a viscosity subsolution $u^{\star} : \overline{\Omega} \times (0,T) \rightarrow \mathbb{R}$ and a viscosity supersolution $u_{\star} : \overline{\Omega} \times (0,T) \rightarrow \mathbb{R}$ of the original problem \eqref{Equation Pe}, which have the form
\begin{eqnarray*}
	u^{\star}_{\varepsilon}(x,t) &\colon=& \lim_{\delta \to 0} \inf \left\{u_{\varepsilon, \mu}(\xi) : \, \xi \in B_{\delta}(x,t), \, 0 < \mu< \delta\right\}\\
	u_{\star,\varepsilon}(x,t) &\colon=& \lim_{\delta \to 0} \sup \left\{u_{\varepsilon, \mu}(\xi) : \, \xi \in B_{\delta}(x,t), \, 0 < \mu< \delta\right\}
\end{eqnarray*} 
In particular, the problem \eqref{Equation Pe} has a viscosity solution $\{u_{\varepsilon}\}$. Moreover,  there exists a constant $\Upsilon= \Upsilon(n,\lambda,\Lambda,c_0)$, such that
\begin{equation} \label{R1}
 0 \le u_{\varepsilon} \le \Upsilon.
\end{equation}
\end{theorem}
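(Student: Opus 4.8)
The plan is to obtain bounds for the regularized solutions $u_{\varepsilon,\delta}$ of \eqref{sol3} that are independent of $\delta$ (and, for the upper bound, of $\varepsilon$ as well), and then to let $\delta\to0$ through the Barles--Perthame half-relaxed limits, identifying the limits with the announced sub- and supersolution of \eqref{Equation Pe} and closing by Perron's method (Proposition~\ref{Perron}). Recall that, as quoted above, each problem \eqref{sol3} has a unique classical solution $u_{\varepsilon,\delta}\in C^{3,\beta}(\Omega_T)\cap C(\overline{\Omega_T})$, and that the matrix $(a^{\delta,p}_{ij})$ in \eqref{coef} is uniformly parabolic with ellipticity constants $\lambda,\Lambda$ that do not depend on $\delta$.

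First I would establish the uniform bounds by comparison. For the upper bound, $\zeta_\varepsilon\ge0$ and $f_\varepsilon\ge c_0>0$ force $a^{\delta,p}_{ij}(\nabla u_{\varepsilon,\delta})(u_{\varepsilon,\delta})_{ij}-\partial_t u_{\varepsilon,\delta}\ge c_0$ in $\Omega_T$, so $u_{\varepsilon,\delta}$ is a subsolution of the uniformly parabolic equation $a^{\delta,p}_{ij}(\vec\eta)w_{ij}-w_t=c_0$. Comparing, by the classical comparison principle for uniformly parabolic equations, with a barrier of the form $w(x)=A-B|x-x_0|^2$ — where $\Omega\subset B_R(x_0)$ and $A,B$ are chosen in terms of $n,\lambda,\Lambda,c_0,R,\|\varphi\|_{L^\infty}$ so that $w$ is a supersolution and $w\ge\varphi$ on $\partial_p\Omega_T$ — yields $u_{\varepsilon,\delta}\le\Upsilon$. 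The essential observation is that this uses only $0\le\zeta_\varepsilon$, never the bound $\zeta_\varepsilon\le\varepsilon^{-1}$, so $\Upsilon$ depends neither on $\delta$ nor on $\varepsilon$. For the lower bound $u_{\varepsilon,\delta}\ge0$ I would argue by comparison exactly as in \cite{RMR}, using $\zeta_\varepsilon\ge0$ together with the sign and monotonicity hypotheses on $f_\varepsilon$ and $\varphi$ (in particular $\varphi\ge0$ and $\varphi(\cdot,0)=0$).

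With the bounds in hand, the family $\{u_{\varepsilon,\delta}\}_{\delta>0}$ is equibounded, so the functions $u^\star_\varepsilon$ and $u_{\star,\varepsilon}$ of the statement are finite, with $u^\star_\varepsilon$ lower semicontinuous, $u_{\star,\varepsilon}$ upper semicontinuous, and $u^\star_\varepsilon\le u_{\star,\varepsilon}$ on $\overline{\Omega_T}$ (directly from the definitions). Since the coefficients in \eqref{coef} converge, $a^{\delta,p}_{ij}(\vec\eta)\to\frac{1}{p}\delta_{ij}+\frac{p-2}{p}\frac{\eta_i\eta_j}{|\vec\eta|^2}$, locally uniformly on $\{\vec\eta\neq0\}$ as $\delta\to0$, while $\zeta_\varepsilon$ and $f_\varepsilon$ remain fixed and continuous, the standard half-relaxed limit argument shows that $u^\star_\varepsilon$ is a viscosity subsolution and $u_{\star,\varepsilon}$ a viscosity supersolution of $\mathcal L w-w_t=\zeta_\varepsilon(w)+f_\varepsilon$ in $\Omega_T$. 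A uniform boundary barrier — available because $\partial\Omega$ is smooth and $\varphi\in W^{2,\infty}(\partial_p\Omega_T)$ — gives moreover $u^\star_\varepsilon=u_{\star,\varepsilon}=\varphi$ on $\partial_p\Omega_T$. Since $\zeta_\varepsilon$ is bounded and Lipschitz and $f_\varepsilon\in C(\Omega_T)$, Perron's method (Proposition~\ref{Perron}), applied with subsolution $u^\star_\varepsilon$ and supersolution $u_{\star,\varepsilon}$, then yields a viscosity solution $u_\varepsilon$ of \eqref{Equation Pe} with $u^\star_\varepsilon\le u_\varepsilon\le u_{\star,\varepsilon}$; in particular $u_\varepsilon=\varphi$ on $\partial_p\Omega_T$, and $0\le u_{\varepsilon,\delta}\le\Upsilon$ passes to the relaxed limits and hence to $u_\varepsilon$, which is \eqref{R1}.

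I expect the heart of the matter to be the half-relaxed passage to the limit at points where the test function has vanishing gradient: there the regularized, uniformly parabolic operators $w\mapsto a^{\delta,p}_{ij}(\nabla w)w_{ij}$ degenerate into $\mathcal L$, which is singular on $\{\nabla w=0\}$, and one must check that $u^\star_\varepsilon$ and $u_{\star,\varepsilon}$ satisfy the USC/LSC-envelope conditions (2)--(3) of Definition~\ref{Def1}, distinguishing $p\ge2$ (with $\lambda_{\max}(D^2\phi)$) from $1<p<2$ (with $\lambda_{\min}(D^2\phi)$). A secondary point worth isolating is that \eqref{Equation Pe} carries no comparison principle — $\zeta_\varepsilon$ is not monotone — so one cannot expect $u^\star_\varepsilon=u_{\star,\varepsilon}$; the solution trapped between them must be produced by Perron, which is precisely why the ordering $u^\star_\varepsilon\le u_{\star,\varepsilon}$ is essential.
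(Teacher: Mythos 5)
Your overall route is the same as the paper's: regularize via \eqref{sol3}, take half-relaxed limits in $\delta$, and produce $u_\varepsilon$ between them by Perron's method (Proposition~\ref{Perron}). For the half-relaxed-limit part the paper simply cites \cite{Does}, and it obtains \eqref{R1} not by barriers but by ABP estimates: the upper bound by putting $u_\varepsilon-\|\varphi\|_\infty$ in the Pucci class $\underline{\mathscr{S}}(\lambda/n,\Lambda,c_0)$, the lower bound by a separate ABP argument described below. Your explicit parabolic barrier for the upper bound is a perfectly acceptable substitute and, as you observe, uses only $\zeta_\varepsilon\ge0$, so the constant is independent of $\delta$ and $\varepsilon$. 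One caveat on the limits: the standard Barles--Perthame theorem makes the \emph{upper} relaxed limit a subsolution and the \emph{lower} one a supersolution, i.e.\ the opposite pairing to the one you assert (the labels in the statement itself are arguably swapped); since Proposition~\ref{Perron} needs the subsolution to lie below the supersolution, you should state explicitly which relaxed limit plays which role and why the ordering you feed into Perron actually holds, rather than appealing to ``the standard argument''.

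The genuine gap is the lower bound $u_{\varepsilon,\delta}\ge0$ (equivalently $u_\varepsilon\ge0$). Your plan --- ``comparison exactly as in \cite{RMR}, using $\zeta_\varepsilon\ge0$ together with the signs of $f_\varepsilon$ and $\varphi$'' --- does not go through as stated: since $\zeta_\varepsilon(0)+f_\varepsilon\ge c_0>0$, the zero function is \emph{not} a subsolution of \eqref{sol3} or \eqref{Equation Pe}, and $f_\varepsilon\ge c_0$ enters the equation with the sign of a sink, pushing the solution downward; $\zeta_\varepsilon\ge0$ only helps for the upper bound. The structural fact the paper's proof hinges on, and which is absent from your sketch, is that $\supp\zeta_\varepsilon\subset[0,\varepsilon]$: on the set $A_\varepsilon=\{u_\varepsilon<0\}$ the term $\zeta_\varepsilon(u_\varepsilon)$ vanishes, so there the equation reduces to $a^{\delta,p}_{ij}(\nabla u_\varepsilon)D_{ij}u_\varepsilon-\partial_t u_\varepsilon=f_\varepsilon\le c_1$ with $u_\varepsilon\ge0$ on $\partial_p A_\varepsilon$; the paper then places $u_\varepsilon$ in $\overline{\mathscr{S}}(\lambda/n,\Lambda,c_1)$ on $A_\varepsilon$ and applies the ABP estimate to reach a contradiction with $A_\varepsilon\neq\emptyset$. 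Whatever variant you prefer (ABP on $A_\varepsilon$, or a barrier adapted to $A_\varepsilon$ exploiting the time-monotonicity of $f$ and $\varphi$), the restriction to $\{u_\varepsilon<0\}$, where the singular term switches off, is the missing idea; without it the nonnegativity half of \eqref{R1} is unproved in your proposal.
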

\begin{proof}
For a proof of first part of the Theorem, we refer the reader to \cite{Does}. The existence of viscosity solution to \eqref{Equation Pe} follows from  Proposition \ref{Perron}.  To prove \eqref{R1}, let $v^{\varepsilon}:= u_{\varepsilon} -\|\varphi\|_\infty$. Note that $v^{\varepsilon} \le 0$ on $\partial_p\Omega_T$ and 
\begin{eqnarray*}
    a^{\delta,p}_{ij}(\nabla v^{\varepsilon})v^{\varepsilon}_{ij} -\partial_tv^{\varepsilon}=   a^{\delta,p}_{ij}(\nabla u_{\varepsilon})D_{ij} u_{\varepsilon} -\partial_tu_{\varepsilon}\ge c_0.
\end{eqnarray*}
This means that $v_{\varepsilon} \in\underline{\mathscr{S}}(\frac{\lambda}{n}, \Lambda,c_0)$. The ABP estimate (\cite[Theorem 3.14]{Wang}) then implies 
$$\displaystyle\sup_{\Omega_T} (v_{\varepsilon})^{+} \le C(\lambda, \Lambda, n, c_0).$$
Thus, $u_{\varepsilon} \le \|\varphi\|_{\infty} + C(\lambda, \Lambda, n, c_0) =: \Upsilon$.

In order to prove the nonnegativity of $u_{\varepsilon}$ we assume the contrary, i.e. that $A_\varepsilon := \{(x,t) \in\Omega_T  : u_{\varepsilon}(x,t) < 0\}\neq\emptyset$. Since $\zeta_\varepsilon$ is supported in
$[0,\varepsilon]$, then
$$
  a^{\delta,p}_{ij}(\nabla u_{\varepsilon})D_{ij} u_{\varepsilon}   -\partial_tu_{\varepsilon} = f_{\varepsilon}\le c_1\,  \textrm{ in }\, A_\varepsilon,
$$
which means that $u_\varepsilon\in\mathscr{\overline{S}}(\frac{\lambda}{n},\Lambda,c_1)$. Another application of the ABP estimate provides that $u_\varepsilon \ge 0$ in $A_\varepsilon$, which is a contradiction.

\end{proof}

In order to pass to the limit as $\varepsilon \to 0^{+}$ to obtain a solution of \eqref{P1} we need sharp estimates uniform in $\varepsilon \in (0,1]$. Our main contribution is the following.

\begin{theorem} \label{tel4.1}
Let $u_{\varepsilon}$ be a viscosity solution of $\eqref{Equation Pe}$. Let $K \subset\Omega_T$ be compact and $\tau >0$ be such that $\mathcal{N}_{2 \tau}(K) \subset\Omega_T$. Then there exists a constant $L=L(\tau,\lambda,\Lambda,c_0,c_1, \|\nabla f\|_{\infty}, \|\varphi\|_{\infty}, K)$ (in particular, not depending on $\varepsilon$) such that
$$
	\| u_{\varepsilon} \|_{\textrm{Lip}(1,1/2)} \le L.
$$
\end{theorem}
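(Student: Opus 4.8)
The plan is to establish the uniform Lipschitz bound in space and the uniform $\tfrac12$-Hölder bound in time by the classical covering-and-rescaling argument, exploiting only two ingredients that are uniform in $\varepsilon$: the $L^\infty$ bound \eqref{R1} and the precise way the singular right-hand side $\zeta_\varepsilon(u_\varepsilon)$ interacts with rescaling. First I would fix a point $(X_0,t_0)\in K$ and, for $0<r\le\tau$, consider the rescaled function $v_r(x,t):=\tfrac{1}{r}\,u_\varepsilon(X_0+rx,\,t_0+r^2 t)$ on $Q_1$. A direct computation shows $v_r$ solves $\mathcal L v_r - \partial_t v_r = r\,\zeta_\varepsilon(r v_r) + r f_\varepsilon(X_0+rx,t_0+r^2t)$ in the viscosity sense (the normalized operator $\mathcal L$ is $1$-homogeneous, so it picks up exactly one factor of $r$). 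The new right-hand side is bounded by $r\cdot\tfrac1\varepsilon\chi_{(0,\varepsilon)}(rv_r) + r c_1$; crucially, when $\varepsilon/r$ is small, $\zeta_\varepsilon(rv_r)$ is supported in $\{0<v_r<\varepsilon/r\}$, a thin layer, and when $\varepsilon/r$ is large the contribution $r/\varepsilon$ is itself small. Either way the point is that we are in the regime where uniform interior estimates for $\mathcal L w - w_t = g$ with $\|g\|_\infty$ controlled apply, after splitting scales at $r\sim\varepsilon$.

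The heart of the matter is a dichotomy on the scale $r$. For $r\ge\varepsilon$: the forcing $r\zeta_\varepsilon(rv_r)+rf_\varepsilon$ is bounded in $L^\infty(Q_1)$ by a constant depending on $c_1$ but not on $\varepsilon$ (indeed $r\cdot\varepsilon^{-1}\le 1$ is false in this regime, so one instead uses that $\zeta_\varepsilon(rv_r)$ vanishes once $v_r\ge\varepsilon/r$, i.e. we only feel the potential in a region where by non-degeneracy $v_r$ is already controlled — alternatively one absorbs it using the $L^\infty$ bound $v_r\le\Upsilon/r$ combined with interior Harnack/Krylov–Safonov estimates for the uniformly parabolic regularization \eqref{sol3}, which hold with constants independent of $\delta$ and $\varepsilon$). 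For $r<\varepsilon$: here one does not rescale; instead one observes that $\zeta_\varepsilon(u_\varepsilon)\le 1/\varepsilon$ may be large, but this happens only in the set $\{0<u_\varepsilon<\varepsilon\}$, and there the $L^\infty$ bound forces oscillation at most $\varepsilon$, so one gains Lipschitz control trivially at scales below $\varepsilon$. I would run this through the $C^{1,\alpha}$-type interior estimates for the normalized $p$-parabolic operator from \cite{AM,Does,Silv} applied to the regularized equation \eqref{sol3} (with constants uniform in $\delta$), then pass to the limit $\delta\to0$ using Proposition \ref{Prop2}.

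Concretely the steps are: (i) reduce to interior estimates near $K$ using the sub-/supersolution barriers and the boundary data $\varphi\in C^{1,\alpha}$ to handle a collar near $\partial_p\Omega_T$; (ii) for $(X_0,t_0)\in K$ and $\varepsilon\le r\le\tau$, rescale to $v_r$ on $Q_1$, bound the forcing term uniformly, and invoke the uniform $C^{1,1/2}$ (or even $C^{1,\alpha}$ in space, $C^{0,(1+\alpha)/2}$ in time) interior regularity for $\mathcal L\,\cdot-\partial_t(\cdot)=g$, giving $[u_\varepsilon]_{\mathrm{Lip}(1,1/2)(Q_{r/2}^-(X_0,t_0))}\le C$ with $C$ independent of $\varepsilon$; (iii) for $r<\varepsilon$, use $0\le u_\varepsilon\le\Upsilon$ together with the fact that on $\{0<u_\varepsilon<\varepsilon\}$ the oscillation of $u_\varepsilon$ over a ball of radius $\lesssim\varepsilon$ is at most $\varepsilon$, so the difference quotient $|u_\varepsilon(x,t)-u_\varepsilon(y,s)|/(|x-y|+|t-s|^{1/2})$ is bounded; (iv) combine (ii)–(iii) by a standard interpolation/telescoping over dyadic scales to get the full seminorm bound on $K$, and add $\|u_\varepsilon\|_{L^\infty}\le\Upsilon$ to control the norm.

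The main obstacle I expect is step (ii) in the regime where $r$ is comparable to $\varepsilon$: there the rescaled forcing $r\zeta_\varepsilon(rv_r)$ is genuinely of order one and concentrated, so one cannot simply cite a small-forcing estimate. The standard fix — used in \cite{RMR} and in the elliptic prototypes of Caffarelli–Karp–Shahgholian and Teixeira — is a \emph{flatness/compactness} argument: if the Lipschitz seminorm at some scale $r\sim\varepsilon$ were enormous, rescale by that seminorm, extract a limit solving the homogeneous equation $\mathcal L w-\partial_t w=0$ (the forcing, being bounded, disappears under the blow-up), and derive a contradiction with Liouville-type rigidity for normalized $p$-parabolic functions of linear growth. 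Making the compactness step work requires the uniform-in-$\delta,\varepsilon$ interior Hölder estimate for solutions of the regularized problem \eqref{sol3}, which is exactly what the theory in \cite{AM,Does,Silv} provides; the non-degeneracy needed to localize the support of $\zeta_\varepsilon$ is the companion lower bound that the paper establishes alongside this theorem.
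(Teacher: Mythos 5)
Your plan founders on the single point that actually makes this theorem hard: the forcing $\zeta_\varepsilon(u_\varepsilon)$ is of size $1/\varepsilon$ on the transition layer $\{0<u_\varepsilon<\varepsilon\}$, and nothing you invoke controls it uniformly in $\varepsilon$. In step (ii), for $\varepsilon\le r\le\tau$ the rescaled forcing $r\,\zeta_\varepsilon(r v_r)$ is of order $r/\varepsilon$, which blows up as $\varepsilon\to0$ at any fixed scale $r$, so the interior estimates of \cite{AM,Does,Silv} for $\mathcal{L}w-\partial_t w=g$ with $\|g\|_\infty$ bounded simply do not apply; saying that the potential is ``supported in a thin layer'' does not rescue this, because those estimates need $L^\infty$ (or at least integral) control of $g$, and proving that the layer is geometrically thin requires non-degeneracy and growth control of $u_\varepsilon$ --- i.e.\ the very estimate you are proving (the non-degeneracy in the paper, Lemma \ref{l6.1}, is established for the limit $u$ after this theorem and is not available here). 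Step (iii) is also circular: from the $L^\infty$ bound alone, a point where $u_\varepsilon<\varepsilon$ can a priori lie at distance $\ll\varepsilon$ from a point where $u_\varepsilon$ is of order one, so difference quotients at scales $r<\varepsilon$ are not ``trivially'' bounded. Finally, the compactness/Liouville fix you propose for $r\sim\varepsilon$ starts from ``the forcing, being bounded, disappears under the blow-up,'' which is exactly the unproved (and, uniformly in $\varepsilon$, false) premise.

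The paper resolves this by a different mechanism. The spatial bound (Proposition \ref{p4.22}) is a Bernstein argument on the regularized problem \eqref{sol3}: one maximizes $w=\xi\,(|\nabla u^{\delta}|^{2}+\delta)^{1/2}+\frac{\Gamma}{2\varepsilon^{2}}(u^{\delta})^{2}$, and the dangerous term of size $\varepsilon^{-2}\zeta'$ produced by differentiating $\zeta_\varepsilon(u^{\delta})$ is absorbed by the penalization term with $\mu=\Gamma/(2\varepsilon^{2})$, choosing $\Gamma$ large and using that $u^{\delta}\le\varepsilon$ on the relevant set so that $\mu (u^{\delta})^{2}\le\Gamma$ stays bounded; outside $\{u^{\delta}\le\varepsilon\}$ the right-hand side is bounded and standard up-to-the-boundary estimates apply, and the bound passes to $u_\varepsilon$ as $\delta\to0$ (Corollaries \ref{p4.1}, \ref{c4.1}). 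The time H\"older-$1/2$ estimate is likewise not obtained from interior parabolic regularity, but from the spatial Lipschitz bound combined with explicit quadratic-in-space, linear-in-time barriers and the comparison principle in the region where the rescaled solution exceeds the $\varepsilon$-threshold (where the forcing is bounded by $\tau c_1$), the excursions through the sublevel set being handled directly by the spatial Lipschitz control (Lemma \ref{l4.1}, Proposition \ref{p4.2}). If you wish to keep a scaling-based scheme you must supply an independent, $\varepsilon$-uniform control of $u_\varepsilon$ across the layer $\{0<u_\varepsilon<\varepsilon\}$; as written, your proposal does not contain that idea, so there is a genuine gap.
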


Theorem \ref{tel4.1} gives us the necessary compactness to pass to the limit as $\varepsilon \to 0^+$ obtaining a viscosity solution of \eqref{P1}. As a consequence, we also have sharp regularity.

\begin{theorem} \label{te4.1}
Let $u$ be a viscosity solution of $\eqref{P1}$. Let $K \subset\Omega_T$ be compact and $\tau >0$ be such that $\mathcal{N}_{2 \tau}(K) \subset\Omega_T$. Then there exists a constant $L=L(\tau,\lambda,\Lambda,c_0,c_1, \|\nabla f\|_{\infty}, \|\varphi\|_{\infty}, K)$ such that
$$
	\|u\|_{\textrm{Lip}(1,1/2)(K)}\le L.
$$
\end{theorem}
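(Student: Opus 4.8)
The plan is to derive Theorem \ref{te4.1} from the uniform-in-$\varepsilon$ estimate of Theorem \ref{tel4.1} by a compactness and passage-to-the-limit argument. The only genuine content not already provided is the verification that the limit solution $u$ of \eqref{P1} can be obtained as a locally uniform limit of (a subsequence of) the $u_\varepsilon$, and that the $\textrm{Lip}(1,1/2)$ bound is preserved under this limit. The porosity/measure-theoretic consequences are not part of this statement, so I focus only on the regularity transfer.

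\begin{proof}
Let $K\subset\Omega_T$ be compact and $\tau>0$ with $\mathcal N_{2\tau}(K)\subset\Omega_T$. By Theorem \ref{tel4.1} applied to $K$ (and, if needed, to an exhausting sequence of compact sets $K_j\Subset\Omega_T$ with suitable $\tau_j$), the family $\{u_\varepsilon\}_{\varepsilon\in(0,1]}$ is uniformly bounded by \eqref{R1} and uniformly bounded in the $\textrm{Lip}(1,1/2)$ seminorm on $\mathcal N_{\tau}(K)$, with the bound $L$ independent of $\varepsilon$. In particular $\{u_\varepsilon\}$ is equibounded and equicontinuous on $K$ (with respect to the parabolic modulus $|x-y|+|t-s|^{1/2}$), so by the Arzel\`a--Ascoli theorem and a diagonal extraction over the exhaustion $\{K_j\}$, there is a sequence $\varepsilon_k\to 0^+$ and a function $u\in C(\Omega_T)$ such that $u_{\varepsilon_k}\to u$ locally uniformly in $\Omega_T$. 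Passing to the limit in the uniform seminorm bound gives, for every such $K$,
\[
[u]_{\textrm{Lip}(1,1/2)(K)}=\lim_{k\to\infty}[u_{\varepsilon_k}]_{\textrm{Lip}(1,1/2)(K)}\le L,
\]
and combining with $0\le u\le\Upsilon$ from \eqref{R1} yields $\|u\|_{\textrm{Lip}(1,1/2)(K)}\le L$, with $L$ depending only on the stated quantities.

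It remains to check that this limit $u$ is a viscosity solution of \eqref{P1}, so that the statement applies to it; alternatively, if $u$ in the theorem is \emph{defined} as such a limit, this step identifies the two. Away from the free boundary, i.e. on the open set $\{u>0\}$, local uniform convergence forces $u_{\varepsilon_k}>\varepsilon_k$ eventually on compact subsets, so $\zeta_{\varepsilon_k}(u_{\varepsilon_k})=0$ there by the support condition on $\zeta_\varepsilon$, and $u_{\varepsilon_k}$ solves $\mathcal L u_{\varepsilon_k}-\partial_t u_{\varepsilon_k}=f_{\varepsilon_k}$; since $f_{\varepsilon_k}\to f$ locally uniformly and $f\ge c_0>0$, Proposition \ref{Prop2} (stability) gives $\mathcal L u-u_t=f$ in $\{u>0\}$. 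One checks in the usual way that $u$ is a supersolution of the same equation across $\partial\{u>0\}$ as well (the right-hand side $\zeta_{\varepsilon_k}(u_{\varepsilon_k})+f_{\varepsilon_k}\ge f_{\varepsilon_k}\ge c_0>0$ is uniformly bounded below, so touching test functions from below pass to the limit), which is exactly what \eqref{P1} asks for with $f\cdot\chi_{\{u>0\}}$. Finally, $u_{\varepsilon_k}=\varphi$ on $\partial_p\Omega_T$ passes to the limit, and $\varphi(x,0)=0$ together with $u\ge 0$ forces the normalization $u(x,0)=0$.

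The only delicate point is the local uniform convergence near the free boundary: equicontinuity there is precisely what Theorem \ref{tel4.1} supplies, so no extra work is needed, and the singular term $\zeta_{\varepsilon_k}(u_{\varepsilon_k})$ simply does not obstruct the limit because it is nonnegative and, on $\{u>0\}$, eventually zero. I do not expect any serious obstacle beyond bookkeeping; all the analytic difficulty is concentrated in Theorem \ref{tel4.1}, which we are entitled to assume, and the present statement is its routine corollary obtained by compactness.
\end{proof}
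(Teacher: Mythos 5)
Your proposal is correct and follows essentially the same route as the paper: the paper also obtains the solution $u$ of \eqref{P1} as the locally uniform limit of the $u_\varepsilon$ (pre-compactness in $\textrm{Lip}(1,1/2)$ supplied by Theorem \ref{tel4.1}), passes the uniform seminorm bound to the limit, and verifies via stability (citing \cite{RMR}) that the limit solves \eqref{P1}. The only cosmetic point is that one should invoke lower semicontinuity of the seminorm under uniform convergence (i.e.\ $[u]_{\textrm{Lip}(1,1/2)(K)}\le\liminf_k [u_{\varepsilon_k}]_{\textrm{Lip}(1,1/2)(K)}\le L$) rather than assert the limit of the seminorms exists, but this does not affect the argument.
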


This regularity result and some other properties of the free boundary problem \eqref{P1} are proved in Section \ref{s4}. 


\subsection{Some properties of the free boundary}

In this section we establish the exact growth of the solution near the free boundary, from which we deduce the porosity of its time level sets.
\begin{definition}\label{d5.1}
A set $E\subset\mathbb{R}^n$ is called porous with porosity $\delta>0$, if there exists $R>0$ such that
$$
   \forall x\in E, \,\,\,\forall r\in(0,R),\,\,\,\exists y\in\mathbb{R}^n\,\textrm{ such that }\,B_{\delta r}(y)\subset B_r(x)\setminus E.
$$
\end{definition}
A porous set of porosity $\delta$ has Hausdorff dimension not exceeding $n-c\delta^n$, where $c=c(n)>0$ is a constant depending only on $n$. In particular, a porous set has Lebesgue measure zero.

The main result of this section is as follows.
\begin{theorem}\label{t6.1}
Let $u$ be a solution of \eqref{P1}. Then, for every compact set $K \subset \Omega_T$ and every $t_0 \in (0,T)$, the set 
$$\partial\{u > 0\}\cap K \cap\{t=t_0\}$$ 
is porous in $\mathbb{R}^n$, with porosity depending only on $\Upsilon$ and $\textrm{dist}(K,\partial_p \Omega_T)$. In other words,
$$
	\mathcal{L}^{n}(\partial\{u > 0\}\cap K \cap\{t=t_0\})=0.
$$
\end{theorem}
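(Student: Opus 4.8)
The plan is to combine two ingredients at a fixed time level $t=t_0$: (i) the sharp Lipschitz-in-space growth of $u$ away from the free boundary, furnished by Theorem \ref{te4.1} together with the uniform bound \eqref{R1}, and (ii) a non-degeneracy estimate asserting that at any free boundary point $(z_0,t_0)$ and any small $r>0$ one has $\sup_{B_r(z_0)} u(\cdot,t_0)\ge c r$ for a universal constant $c>0$ (this is the $p$-parabolic analogue of the non-degeneracy proved in \cite{RMR,Sha}, obtained by comparison against a suitable barrier built from $f\ge c_0>0$ in $\{u>0\}$, since $\mathcal{L}$ is uniformly elliptic in the regularized setting). I would state the non-degeneracy as a separate lemma and then argue as follows. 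Fix $(x_0,t_0)\in\partial\{u>0\}\cap K$ and $r\in(0,R)$ with $R$ comparable to $\mathrm{dist}(K,\partial_p\Omega_T)$ so that all the cylinders used below stay inside $\Omega_T$.

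First I would pick, using non-degeneracy at the free boundary point, a point $y_0\in \overline{B_{r/4}(x_0)}$ with $u(y_0,t_0)\ge c\,\tfrac{r}{4}=:c_1 r$. Let $d:=\mathrm{dist}(y_0,\partial\{u>0\}\cap\{t=t_0\})$; then $d>0$ and $B_d(y_0)\subset\{u>0\}\cap\{t=t_0\}$. By the Lipschitz bound of Theorem \ref{te4.1}, for the point $\bar y\in\partial\{u>0\}\cap\{t=t_0\}$ realizing the distance $d$ we have $c_1 r\le u(y_0,t_0)=u(y_0,t_0)-u(\bar y,t_0)\le L\,|y_0-\bar y| = L\,d$, hence $d\ge (c_1/L)\,r=:\delta_0 r$ with $\delta_0$ depending only on the data (through $c$, $L$, $\Upsilon$ and $\mathrm{dist}(K,\partial_p\Omega_T)$). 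Thus $B_{\delta_0 r}(y_0)$ is a ball of radius proportional to $r$ that lies in $\{u>0\}$ at time $t_0$, so it misses $\partial\{u>0\}\cap\{t=t_0\}$ entirely; since $y_0\in B_{r/4}(x_0)$ and $\delta_0$ may be taken $\le 1/4$, one also has $B_{\delta_0 r}(y_0)\subset B_r(x_0)$. This verifies Definition \ref{d5.1} with porosity $\delta=\delta_0$ and radius $R$, where $\delta_0$ depends only on $\Upsilon$ and $\mathrm{dist}(K,\partial_p\Omega_T)$ as claimed. The final measure-zero conclusion is then immediate from the stated fact that a porous set has Hausdorff dimension at most $n-c(n)\delta_0^{\,n}<n$, hence $\mathcal{L}^n$-null.

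The main obstacle, and the step requiring the real work, is establishing the non-degeneracy estimate uniformly in the limit $\varepsilon\to 0$: one must show that along the approximating sequence $u_\varepsilon$ (solutions of \eqref{Equation Pe}), whenever $u_\varepsilon(x_0,t_0)$ is small, $\sup_{B_r(x_0)}u_\varepsilon(\cdot,t_0)\ge c r$ with $c$ independent of $\varepsilon$, and then pass to the limit. The delicate points are that $\mathcal{L}$ is singular where $\nabla u=0$, so the comparison/barrier argument should be carried out at the regularized level $a^{\delta,p}_{ij}$ where uniform parabolicity (with the explicit $\lambda,\Lambda$ in the excerpt) and the ABP/maximum principle machinery (as already invoked in the existence proof) apply, using the quadratic-type subsolution associated with the lower bound $f_\varepsilon\ge c_0$; and that the estimate must be anchored at a time slice, which is compatible with $\varphi(\cdot,0)=0$ and the monotonicity in $t$ of $f$ and $\varphi$ assumed in the hypotheses. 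Once non-degeneracy survives the double limit $\delta\to0$, $\varepsilon\to0$ by the stability Proposition \ref{Prop2} and the uniform Lipschitz bound of Theorem \ref{tel4.1}, the porosity argument above goes through verbatim for the limit solution $u$ of \eqref{P1}.
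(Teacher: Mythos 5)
Your overall skeleton (locate a point of large value near the free boundary point, convert a growth estimate into a clean ball inside $\{u>0\}$, verify Definition~\ref{d5.1}) is the same as the paper's, but the two estimates you pair are the wrong ones, and this is a genuine gap. First, the non-degeneracy you posit, $\sup_{B_r(z_0)}u(\cdot,t_0)\ge c\,r$, is false in this setting: since $\mathcal{L}u-u_t=f\le c_1$ in $\{u>0\}$, the solution detaches at most quadratically from the free boundary (this is exactly the paper's upper bound $\sup_{B_r(X_0)}u(\cdot,t_0)\le D_0r^2$, Theorem~\ref{l6.3}), so a linear lower bound would be contradictory for $r$ small. The correct non-degeneracy here is quadratic, $\sup_{\partial_p Q_r^-(z,s)}u\ge \mu_0 r^2+u(z,s)$ (Lemma~\ref{l6.1}, combined with $\partial_t u\ge0$ to pass to the time slice); linear non-degeneracy is the signature of Bernoulli/flame-propagation type limits, not of this obstacle-type limit with $f\ge c_0>0$.

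Second, even after replacing your lemma by the correct quadratic one, your next step breaks down: pairing $u(y_0,t_0)\ge \mu_0(r/4)^2$ with the spatial Lipschitz bound of Theorem~\ref{te4.1} only gives $d\ge (\mu_0/16L)\,r^2$, i.e.\ a ball of radius comparable to $r^2$, which does not verify porosity (Definition~\ref{d5.1} requires a ball of radius $\delta r$, with $\delta$ fixed). This is precisely why the paper does not use the Lipschitz estimate at this stage, but instead proves the matching quadratic upper growth bound $u\le C_0\,d(x,t)^2$ (Theorem~\ref{l6.3}, which is the real work of Section~\ref{s6}, via the dyadic class $\Theta$, Lemma~\ref{l6.2}, and a barrier): then $\mu_0 r^2\lesssim u(y_0,t_0)\le C_0 d^2$ yields $d\ge \sqrt{\mu_0/C_0}\,r$, a ball of radius proportional to $r$ contained in $\{u>0\}\cap\{t=t_0\}$ and in $B_r(x_0)$, which is what porosity needs. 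So the missing ingredient in your proposal is the quadratic growth bound near the free boundary; the non-degeneracy must be quadratic, and it must be matched against a quadratic (not Lipschitz) upper bound.
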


The proof is based on the exact growth of the solution of the problem \eqref{P1} near the free boundary. This result is proved in Section \ref{s6}.
\begin{theorem}
Assume $p \in (1,+\infty)$ and let $u$ be a viscosity solution of the problem \eqref{P1}. Then there exist two positive constants $d_0=d_0(n,p,c_0)$ and $D_0 = D_0(n,p,c_0,c_1)$ such that for every compact set $K \Subset \Omega_T$, $(X_0,t_0) \in (\partial \{u>0\}) \cap K$, the following estimates hold
$$
	d_0 r^2 \le \sup_{B_r(X_0)} u(\cdot, t_0) \le D_0 r^2.
$$
\end{theorem}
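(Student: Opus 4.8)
The plan is to establish the two bounds separately, each as a consequence of the sharp $\mathrm{Lip}(1,1/2)$ regularity (Theorem \ref{te4.1}) together with a non-degeneracy estimate obtained by comparison. For the upper bound, fix $(X_0,t_0)\in\partial\{u>0\}\cap K$. Since $u(X_0,t_0)=0$ and, by the parabolic Lipschitz estimate, $u$ is Lipschitz in space and $1/2$-H\"older in time with a constant $L$ controlled only through $K$, one gets $u(X,t_0)\le L|X-X_0|$ on $B_r(X_0)$, which is only linear growth. To upgrade this to the quadratic bound $D_0 r^2$ one uses that $u$ vanishes on a whole neighbourhood of $(X_0,t_0)$ in the zero set: for $X\in B_r(X_0)$ with $u(X,t_0)>0$, let $d=\mathrm{dist}(X,\partial\{u(\cdot,t_0)>0\})\le r$ and apply interior elliptic-type (parabolic) estimates for $\mathcal{L}u-u_t=f$ on $Q_d^-$ (or the cylinder centred appropriately), where $0<c_0\le f\le c_1$; since $u$ vanishes at the base of the cylinder and $f$ is bounded, $\sup_{Q_{d/2}}u\le C(n,p,c_1)d^2$. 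Iterating/covering and using $d\le r$ yields $\sup_{B_r(X_0)}u(\cdot,t_0)\le D_0 r^2$ with $D_0=D_0(n,p,c_0,c_1)$.

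For the lower bound I would argue by contradiction via a barrier. Suppose $\sup_{B_r(X_0)}u(\cdot,t_0)<d_0 r^2$ for $d_0$ to be chosen small. Consider the comparison function $w(X,t):=A|X-X_0|^2+B(t-t_0)$ (or a parabolically scaled paraboloid) on the cylinder $Q_r^-(X_0,t_0)$. A direct computation gives $\mathcal{L}w-w_t = \frac{1}{p}\cdot 2nA + \frac{p-2}{p}\cdot 2A - B = \frac{2A(n+p-2)}{p}-B$; choosing $A,B$ with this expression strictly less than $c_0\le f$ makes $w$ a strict supersolution of the equation satisfied by $u$ on $\{u>0\}$. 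Since $(X_0,t_0)$ is a free boundary point there is a sequence of points $(Y_k,s_k)\to(X_0,t_0)$ with $u(Y_k,s_k)>0$; pick $Y\in B_{r/2}(X_0)$ with $u(Y,t_0)>0$ (using continuity and monotonicity of $\varphi$ in $t$ to transfer to the time slice $t_0$, or work on the slightly earlier slice where $u$ is still positive). On $\partial_p$ of the connected component of $\{u>0\}$ inside $Q_r^-$, compare $u$ with $w+\sup_{B_r}u(\cdot,t_0)$: if $d_0$ is small relative to $A$, then $w\ge \sup_{B_r}u$ on the lateral boundary while $u=0$ on the free boundary portion, so Proposition \ref{Prop1} forces $u\le w+d_0r^2$ throughout — but evaluating near the center contradicts non-degeneracy unless $\sup_{B_r}u(\cdot,t_0)\ge d_0r^2$. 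Making this clean requires choosing the component of $\{u>0\}$ touching $X_0$ and controlling the time interval so the comparison cylinder stays inside $\mathcal{N}_{2\tau}(K)\subset\Omega_T$.

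The main obstacle is the interplay between the time variable and the one-sided (parabolic) nature of the free boundary: unlike the elliptic case, $\{u(\cdot,t_0)>0\}$ may be empty near $X_0$ at the exact slice $t=t_0$ while $u>0$ at slightly earlier times, so one must use the sign conditions carefully — $f$ non-increasing in $t$, $\varphi$ non-decreasing in $t$, $\varphi(\cdot,0)=0$ — to guarantee that the free boundary point genuinely sits on the boundary of a positivity set that reaches the slice $t_0$, and to choose the correct backward cylinder $Q^-_r$ for the barrier argument. A secondary technical point is that the operator $\mathcal{L}$ is singular where $\nabla u=0$, so all comparisons must be phrased in the viscosity sense of Definition \ref{Def1}; the radial barrier $A|X-X_0|^2$ has a gradient vanishing only at the single point $X_0$, and there one checks the degenerate clauses (2)–(3) directly, which causes no trouble since $\lambda_{\max},\lambda_{\min}$ of $2A\,\mathrm{Id}$ are both $2A$. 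Once both inequalities are in place the constants $d_0(n,p,c_0)$ and $D_0(n,p,c_0,c_1)$ are exactly those emerging from the barrier computation and the interior parabolic estimate.
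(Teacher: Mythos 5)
Your lower-bound argument is essentially the paper's (a paraboloid barrier compared against $u$ in a backward cylinder intersected with the positivity set), but your upper bound contains the real gap. You reduce it to the claim that, at a point $X$ with $d=\mathrm{dist}(X,\partial\{u(\cdot,t_0)>0\})$, ``interior parabolic estimates'' for $\mathcal{L}u-u_t=f$ plus the fact that $u$ vanishes at the base of the cylinder yield $\sup_{Q_{d/2}}u\le C(n,p,c_1)\,d^2$. No interior estimate gives this: Krylov--Safonov/Schauder-type or the $C^{1+\alpha,\frac{1+\alpha}{2}}$ estimates bound oscillation and derivatives in terms of $\sup|u|$ on a larger set, and cannot convert the vanishing of a nonnegative solution at one boundary point of its positivity set into quadratic smallness --- a nonnegative caloric function vanishing at a boundary point generically grows linearly, which is exactly the rate your $\mathrm{Lip}(1,1/2)$ bound already provides and which is saturated without further structure. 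The quadratic growth is precisely the nontrivial content of the theorem, and its proof in the paper uses that the limit $u$ satisfies $0\le \mathcal{L}u-u_t\le c_1$ in a full cylinder (i.e.\ across the free boundary, not only in $\{u>0\}$) together with $\partial_t u\ge 0$ and $u(X_0,t_0)=0$: this is the class $\Theta$, for which Lemma \ref{l6.2} runs a compactness/blow-up argument (rescale along a hypothetical failing dyadic sequence, use the uniform interior $C^{1+\alpha,\frac{1+\alpha}{2}}$ estimates of \cite{AM} and stability to obtain a limit $v\ge 0$ caloric for $\mathcal{L}$ with $v(0,0)=0$, $\partial_t v\ge0$, force $v\equiv 0$ by the maximum principle and contradict the normalization), after which a dyadic iteration gives $S(r,u)\le Cr^2$ on backward cylinders and the caloric paraboloid $A_1|x|^2+A_2t$ handles the forward-in-time half (Theorem \ref{l6.3}). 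None of this mechanism appears in your sketch, and ``iterating/covering'' does not replace it; note also that routing the bound through the Lipschitz constant $L$ would make $D_0$ depend on $K$ and $\varphi$, weaker than the stated $D_0=D_0(n,p,c_0,c_1)$.

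On the lower bound, your barrier computation is the right one, but the contradiction as phrased is circular: after comparison you say the conclusion ``contradicts non-degeneracy,'' which is the statement being proved. The correct anchoring (Lemma \ref{l6.1}) is at an interior positivity point $(z,s)$ near $(X_0,t_0)$: set $\omega^{\delta}=u-(1-\delta)u(z,s)$ and $\psi(x,t)=\tfrac{pc_0}{4(n+p-2)}|x-z|^2-\tfrac{c_0}{2}(t-s)$, note $\mathcal{L}\psi-\partial_t\psi=c_0\le f=\mathcal{L}\omega^{\delta}-\partial_t\omega^{\delta}$ in $\{u>0\}$ and $\omega^{\delta}\le 0\le\psi$ on $\partial\{u>0\}$; if additionally $\omega^{\delta}\le\psi$ on $\partial_pQ_r^-(z,s)\cap\{u>0\}$, Proposition \ref{Prop1} would force $\omega^{\delta}(z,s)\le\psi(z,s)=0$, contradicting $\omega^{\delta}(z,s)=\delta u(z,s)>0$; hence $\sup_{\partial_pQ_r^-(z,s)}u\ge\mu_0 r^2+u(z,s)$, and one lets $(z,s)\to(X_0,t_0)$ and $\delta\to0$. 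Finally, transferring this supremum from the backward cylinder to the time slice $\{t=t_0\}$ uses the monotonicity in time of the limit solution $u$ itself (property (2) at the end of Section \ref{s4}), not the monotonicity of the boundary datum $\varphi$ that your sketch invokes.
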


\section{Uniform Estimates in time and space for the perturbed problem}\label{s4}

\hspace{0.3cm} This section discusses regularity for the viscosity solution $u_{\varepsilon}$ of the singular perturbation problem \eqref{Equation Pe} for $p \in (1,+\infty)$.  First of all we show Lipschitz continuity of the viscosity solution $u_{\varepsilon}$ with respect to $x$ using a Bernstein type argument.  The strategy to show Lipschitz regularity  is based on the works \cite{Does,RMR} but it turns out that the result is not true for $p=1$ since the constant $\bar L$ (see Proposition \ref{p4.22}) blows up for $p \to 1$. Finally, we will show that bound on the gradients implies limitation in the 
seminorm $\textrm{Lip} \left(1,1/2\right)$. 



\subsection{Uniform spatial regularity.}

In order to prove Theorem \ref{tel4.1}, we choose a regularized problem given by \eqref{sol3}.   We start with the uniform Lipschitz regularity in the spatial variables.

\begin{proposition}\label{p4.22}
If  $\{u_{\varepsilon, \delta}\}$ is a family of solutions of \eqref{sol3}, and $\eqref{E1}$-$\eqref{E2}$ hold, then there exists a constant $\bar L=\bar L(\tau,\lambda,\Lambda,c_0,c_1, \|\nabla f\|_{\infty}, \|\varphi\|_{\infty}, K)>0$, independent of $\delta>0$ and $\varepsilon\in(0,1)$, such that
\begin{equation} \label{M1}
| \nabla u_{\varepsilon,\delta}(x,t)| \le \bar L  \left(1+\frac{1}{\textrm{dist}((x,t), \partial_p \Omega_T)^2}\right),\,\, \forall(x,t) \in \Omega_T.
\end{equation}
\end{proposition}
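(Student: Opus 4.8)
The plan is to run a Bernstein-type computation on the regularised equation \eqref{sol3} — which, unlike \eqref{Equation Pe}, is a \emph{classical}, smooth, uniformly parabolic quasilinear equation, so differentiation is legitimate — and keep track of the constants. Write $u:=u_{\varepsilon,\delta}$, $a_{ij}:=a^{\delta,p}_{ij}(\nabla u)$ and $v:=|\nabla u|^2$. Differentiating \eqref{sol3} in $x_k$, multiplying by $2u_k$ and summing over $k$, one obtains a linear parabolic equation for $v$,
\[
a_{ij}v_{ij}+\vec b\cdot\nabla v-v_t=2\,a_{ij}u_{ki}u_{kj}+2\,\zeta_\varepsilon'(u)\,v+2\,\nabla u\cdot\nabla f_\varepsilon ,
\]
where the drift $\vec b=\vec b(\nabla u,D^2u)$ has components $b_\ell=\partial_{\eta_\ell}a^{\delta,p}_{ij}(\nabla u)\,u_{ij}$. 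From the explicit formula \eqref{coef} one checks $|\partial_{\eta_\ell}a^{\delta,p}_{ij}(\vec\eta)|\le C(p)(|\vec\eta|^2+\delta)^{-1/2}$, hence $|\vec b|\le C(p)\|D^2u\|\,|\nabla u|^{-1}$ and so $|\vec b\cdot\nabla v|\le C(p)\|D^2u\|^2$ — a quantity that, at the extremal point below, is absorbed into the good term $2a_{ij}u_{ki}u_{kj}\ge2\lambda\|D^2u\|^2$ and a multiple of $v$ after a Young step. Here $C(p)$ depends only on $p$ and blows up solely as $p\to1$, which — together with $\lambda=\min\{1/p,(p-1)/p\}\to0$ — is the source of the degeneration of $\bar L$ at $p=1$; all $\delta$-dependent pieces are harmless (AM--GM), so everything below is uniform for $\delta\in(0,1)$.

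The genuinely delicate term is $2\zeta_\varepsilon'(u)v$, of size $\varepsilon^{-2}$ and of uncontrolled sign, so it cannot be absorbed directly. The device — following \cite{Does,RMR} — is the primitive $\mathcal{Z}_\varepsilon(s):=\int_0^s\zeta_\varepsilon(\sigma)\,d\sigma$, which, unlike $\zeta_\varepsilon$, is bounded \emph{uniformly in $\varepsilon$}: $0\le\mathcal{Z}_\varepsilon\le\int_{\mathbb{R}}\zeta=:\gamma_0$. A direct computation gives
\[
a_{ij}\big(\mathcal{Z}_\varepsilon(u)\big)_{ij}-\big(\mathcal{Z}_\varepsilon(u)\big)_t=\zeta_\varepsilon'(u)\,a_{ij}u_iu_j+\zeta_\varepsilon(u)\big(\zeta_\varepsilon(u)+f_\varepsilon\big),
\]
and since $a_{ij}u_iu_j=\tfrac{p-1}{p}v+O(\delta)$ is comparable to $v$, a suitable multiple $\Theta_2\,\mathcal{Z}_\varepsilon(u)$ cancels the leading part of $2\zeta_\varepsilon'(u)v$, leaving a residual of favourable sign or absorbed by the $\zeta_\varepsilon^2$-term and the good Hessian term. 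To restore coercivity I would also add $\Theta_1 u^2$, for which $a_{ij}(u^2)_{ij}-(u^2)_t=2u(\zeta_\varepsilon(u)+f_\varepsilon)+2a_{ij}u_iu_j\ge2\lambda v$ (using $u\ge0$, $\zeta_\varepsilon\ge0$, $f_\varepsilon\ge0$) produces a clean positive multiple of $v$, and since $0\le u\le\Upsilon$ the term $\Theta_1u^2$ does not spoil the final bound. Thus the auxiliary function is
\[
w:=\eta^2 v+\Theta_1 u^2\pm\Theta_2\,\mathcal{Z}_\varepsilon(u),
\]
with $\eta$ a smooth space--time cutoff attached to a lower parabolic cylinder $Q^{-}_{\rho}(x_{*},t_{*})\Subset\Omega_T$ of radius $\rho\simeq\textrm{dist}((x_{*},t_{*}),\partial_p\Omega_T)$, so that $|\nabla\eta|,|D^2\eta|,|\eta_t|$ feed in the inverse powers of the distance that appear in \eqref{M1}.

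The conclusion then follows from the parabolic maximum principle applied to $w$ on $Q^{-}_{\rho}(x_{*},t_{*})$: on $\partial_pQ^{-}_{\rho}$ the cutoff vanishes and $w$ is controlled, so its maximum is attained at a point $P$ with $\nabla w(P)=0$, $D^2w(P)\le0$, $w_t(P)\ge0$; substituting $\eta^2\nabla v=-2\eta v\nabla\eta-2\Theta_1 u\nabla u\pm\Theta_2\zeta_\varepsilon\nabla u$ (from $\nabla w(P)=0$) into the drift and the $\nabla\eta$-cross terms, using $0\ge a_{ij}w_{ij}(P)-w_t(P)$, and collecting, the cross terms are dominated by $2\eta^2 a_{ij}u_{ki}u_{kj}$ (Cauchy--Schwarz) plus a multiple of $v/\rho^2$, the $\zeta_\varepsilon'$-contributions combine with the residual $\zeta_\varepsilon^2$-term, and $2\nabla u\cdot\nabla f_\varepsilon$ is handled by Young against the coercive term, leaving an inequality of the form $c\,\Theta_1\,v(P)\le C(1+\rho^{-2})$. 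Hence $\eta^2v\le C(1+\rho^{-2})$, which at $(x_{*},t_{*})$ yields \eqref{M1}, with $\bar L$ depending on $\lambda,\Lambda,c_0,c_1,\|\nabla f\|_\infty,\|\varphi\|_\infty$ (through $\Upsilon$), on $\tau$ and $K$ (through $\rho$), but on neither $\varepsilon$ nor $\delta$. The main obstacle — and the reason to lean on \cite{Does,RMR} for the detailed bookkeeping rather than redo it from scratch — is precisely the interaction at $P$ between the singular term $2\zeta_\varepsilon'(u)v$ and the parabolic time derivative: in the elliptic case $a_{ij}u_{ij}=\zeta_\varepsilon+f_\varepsilon$ lets the good Hessian term dominate $\zeta_\varepsilon^2$ outright, whereas here $a_{ij}u_{ij}=u_t+\zeta_\varepsilon+f_\varepsilon$, so one must simultaneously control $u_t$ at the extremal point — this is where the monotonicity in $t$ of $f_\varepsilon$ and of $\varphi$ enters, and where the exact combination $\eta^2v+\Theta_1u^2\pm\Theta_2\mathcal{Z}_\varepsilon(u)$ (and the order in which the estimates are carried out) must be chosen with care.
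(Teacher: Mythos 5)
Your overall frame (Bernstein computation on the smooth regularized problem \eqref{sol3}, cutoff attached to a cylinder of radius comparable to the parabolic distance, maximum principle at an interior extremal point) matches the paper's, but the device you propose for the one genuinely dangerous term does not work, and that term is the whole content of the proposition. The term $2\zeta_\varepsilon'(u)v$ is of size $\varepsilon^{-2}v$ and sign-indefinite. Your plan is to cancel it against the $\zeta_\varepsilon'(u)\,a_{ij}u_iu_j$ produced by $\Theta_2\,\mathcal{Z}_\varepsilon(u)$ with $\Theta_1,\Theta_2$ independent of $\varepsilon$. This cancellation is not available: (i) in your $w=\eta^2v+\Theta_1u^2\pm\Theta_2\mathcal{Z}_\varepsilon(u)$ the $v$-contribution carries the weight $\eta^2$ while the $\mathcal{Z}_\varepsilon$-contribution does not, so the two $\zeta_\varepsilon'$-terms are not proportional; (ii) even ignoring the cutoff, $a_{ij}u_iu_j=\tfrac1p v+\tfrac{p-2}{p}\tfrac{v^2}{v+\delta}$ is only \emph{comparable} to $v$, not a fixed multiple of it, and since $\zeta_\varepsilon'$ changes sign, comparability leaves a residual $\zeta_\varepsilon'\bigl[2\eta^2v-\Theta_2a_{ij}u_iu_j\bigr]$ which is still of order $\varepsilon^{-2}v$ with uncontrolled sign; (iii) this residual (and the $\pm\Theta_2\,\zeta_\varepsilon(\zeta_\varepsilon+f_\varepsilon)$ term of size $\varepsilon^{-2}$, as well as the cross term $\nabla\eta\cdot\nabla\mathcal{Z}_\varepsilon(u)=\zeta_\varepsilon\nabla\eta\cdot\nabla u$ if you do cut off $\mathcal{Z}_\varepsilon$) cannot be absorbed by the good Hessian term, because in the parabolic setting the equation only controls $a_{ij}u_{ij}-u_t=\zeta_\varepsilon+f_\varepsilon$, so a large $\zeta_\varepsilon$ may be balanced by $u_t$ rather than by $\|D^2u\|$. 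You acknowledge this last difficulty yourself and defer ``the detailed bookkeeping'' to \cite{Does,RMR}, but that bookkeeping is precisely what has to be supplied here; moreover your guess that it is resolved through the time-monotonicity of $f_\varepsilon$ and $\varphi$ and a control of $u_t$ at the extremal point is not how the proof goes -- no bound on $u_t$ and no monotonicity in $t$ is used at this stage.

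The paper's actual mechanism is different and is the step you are missing. One first splits the domain: on $\{u_{\varepsilon,\delta}>\varepsilon\}$ the reaction term vanishes and interior/up-to-the-boundary parabolic estimates give the gradient bound directly. On $\{u_{\varepsilon,\delta}\le\varepsilon\}$ one takes $w=\xi\,v+\mu\,u_{\varepsilon,\delta}^2$ with the \emph{$\varepsilon$-dependent} weight $\mu=\Gamma/(2\varepsilon^2)$: since $u_{\varepsilon,\delta}\le\varepsilon$ there, $\mu u_{\varepsilon,\delta}^2\le\Gamma/2$ stays bounded, while after substituting $\nabla w=0$ at the maximum point and dividing the resulting inequality by $\mu$, the singular contribution $\varepsilon^{-2}\zeta'|\nabla u_{\varepsilon,\delta}|^2$ appears multiplied by $\mu^{-1}\varepsilon^{-2}=2/\Gamma$, i.e.\ as $\tfrac{2}{\Gamma}\sup|\zeta'|\,\xi^2v^2$, which is absorbed into the left-hand side $\xi^2v^2$ simply by choosing $\Gamma=\Gamma(p,\|\zeta'\|_\infty)$ large; the terms $\mu u_{\varepsilon,\delta}(\zeta_\varepsilon(u_{\varepsilon,\delta})+f_\varepsilon)$ are likewise harmless on this set. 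This is where the uniformity in $\varepsilon$ (and, after letting $\delta\to0$, in $\delta$) comes from, and it is also the place where the constant degenerates as $p\to1$ through $c(p)\sim p/(p-1)$, which your sketch attributes only to $\lambda\to0$. Without this $\varepsilon$-scaled weight (or an equivalent substitute), your chain of estimates stops at an inequality containing an unabsorbed $\varepsilon^{-2}$-sized term, so the proposal as written does not prove \eqref{M1}.
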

\begin{proof}
Fix $\varepsilon >0$ and for each  $\delta>0$, let $u_{\varepsilon, \delta}=u^{\delta}$ (for simplicity) be a smooth viscosity solution to \eqref{sol3}.To show \eqref{M1}, initially, since $\zeta_\varepsilon=0$ in $\{u^{\delta} >\varepsilon\}$, we conclude from up to the boundary parabolic regularity theory (see \cite[Theorem 4.19]{Wang} and  \cite[Theorem 2.5]{Wang2} )  that
$$
| \nabla u^{\delta}|\leq C(\|u^{\delta}\|_\infty+\|f_\varepsilon\|_{n+1}+\|\varphi\|_\infty),
$$
in this region, where $C$ does not depend on $\varepsilon$ and $\delta>0$. The result then follows from $\eqref{E2}$ and \eqref{R1}, passing to the limit $\delta \to 0$. To show a Lipschitz estimate with respect to $x$ in $\{u^{\delta} \le \epsilon\}$, we define $v(x,t) = \left(|Du^{\delta}|^2 + \delta\right)^{1/2}$ and consider the function
$$
	w(x,t) = \xi(x,t) \cdot v(x,t) +\mu u^{\delta}(x,t)^2,
$$
where $\mu = \frac{\Gamma}{2 \varepsilon^2}$, for some $\Gamma \ge 0$, $0 \le \xi \le 1$ is a positive smooth function that vanishes on the parabolic boundary $\partial_p \Omega_{T}$. Let $(x_0,t_0) \in \{u^{\delta} \le \varepsilon\}$ be a point where $w$ takes its maximum in $\Omega_T \cup \partial_p \Omega_T$.  We can assume without loss of generality that
\begin{equation}\label{cond}
	|\nabla u^{\delta}(x_0,t_0)| \ge 1.
\end{equation}
In fact, if $|\nabla u^{\delta}| <1$, then if $0 < \delta \le 1/2$,
\begin{eqnarray*}
|\nabla u^{\delta}(x,t)| &\le& v(x,t) \le w(x,t) \le w(x_0,t_0) \\
&=& \xi(x_0,t_0) v(x_0,t_0) + \mu u^{\delta}(x_0,t_0)^2 \\
&\le& \|\xi\|_{\infty} \sqrt{1+\delta} + \varepsilon^2 \mu \le \sqrt{1+\delta} + \Gamma  \le 2+\Gamma \colon= \bar L.
\end{eqnarray*}

First we suppose that $(x_0,t_0) \not\in \partial_p \Omega_T$. At that point $D^2 w(x_0,t_0)$ is negative definite. We define the coeficient matrix $a^{p,\delta}_{ij}$ as in \eqref{coef}.
Since the matrix $(a^{p,\delta}_{ij}(Du^{\delta}(x,t)))_{ij}$ is negative semi-definite  for all points $(x,t)$ we have
$$
	\sum_{i,j}a^{\delta,p}_{ij}(\nabla u^{\delta}(x_0,t_0)) w_{ij}(x_0,t_0) \le 0
$$
Then,
\begin{eqnarray}
0 &\le& w_t(x_0,t_0) - \sum_{i,j}a^{\delta,p}_{ij}(\nabla u^{\delta}(x_0,t_0))w_{ij}(x_0,t_0)\nonumber\\
&=& \xi \left(v_t - \sum_{i,j} a^{p,\delta}_{ij}(Du^{\delta})v_{ij}\right) + v \left(\xi_t -\sum_{i,j} a^{p,\delta}_{ij}(Du^{\delta}) \xi_{ij}\right) \nonumber\\
&+& 2 \lambda u^{\delta} \left(u^{\delta}_{t} -\sum_{i,j} a^{p,\delta}_{ij}(Du^{\delta}) u^{\delta}_{ij}\right)-2 \sum_{i,j} a^{p,\delta}_{ij}(Du^{\delta}) \xi_j v_{i}\nonumber \\
&-& 2 \lambda \sum_{i,j} a^{p,\delta}_{ij}(D u^{\delta}) u^{\delta}_{i} u^{\delta}_{j}  \label{delta1}
\end{eqnarray}
holds at $(x_0,t_0)$. We estimate each term separately to obtain the desired inequality. The third term on the right hand side is 
\begin{equation} \label{delta4}
	2 \mu u^{\delta} \left( u^{\delta}_{t} -\sum_{i,j} a^{p,\delta}_{ij}(Du^{\delta}) u^{\delta}_{ij} \right)=	-2\mu u^{\delta} \left ( f_{\epsilon} + \zeta_{\epsilon}(u^{\delta}) \right)
\end{equation}
 because $u^{\delta}$ is a classical solution of the approximating problem. Now, we consider
$$
	 \xi \left(v_t - \sum_{i,j} a^{p,\delta}_{ij}(Du^{\delta})v_{ij}\right) .
$$
We differentiate the equation \eqref{delta4} with respect to $x_k$,  multiply $u^{\delta}_{tk}$ with $\frac{u^{\delta}_{k}}{v}$, and sum from $1,\ldots, n$ to obtain
\begin{eqnarray*}
	v_t &=& \frac{1}{p} \frac{1}{v} \sum_{i,k} u^{\delta}_k u^{\delta}_{iik} + \frac{p-2}{p} \frac{1}{v^3} \sum_{i,j,k} u^{\delta}_{i} u^{\delta}_{j} u^{\delta}_{k} u^{\delta}_{ijk}\\
	&+& \frac{p-2}{p} \frac{2}{v^3} \sum_{i,j,k} u^{\delta}_{i} u^{\delta}_{ij} u^{\delta}_{k} u^{\delta}_{jk} - \frac{p-2}{p} \frac{2}{v^5} \left(\sum_{i,j} u^{\delta}_i u^{\delta}_{j} u^{\delta}_{ij}\right)^2 \\
	&-& \frac{1}{v} \sum_{k} \left\{D_k f_{\epsilon}u^{\delta}_k + \epsilon^{-2} \zeta' (u^{\delta}_{k})^2\right\}
\end{eqnarray*}
The second derivatives of $v$ are
$$
v_{ij} =\frac{1}{v} \sum_{k} u^{\delta}_{ik} u^{\delta}_{jk} + \frac{1}{v} \sum_{k} u^{\delta}_{k} u^{\delta}_{ijk} - \frac{1}{v^3} \sum_{k} \left( u^{\delta}_k u^{\delta}_{ik}\right) \sum_{\ell} \left(u^{\delta}_{\ell}u^{\delta}_{j \ell}\right),
$$
and thus we have by a straightforward calculation
\begin{eqnarray*}
v_t - \sum_{i,j} a^{p,\delta}_{ij}(Du^{\delta}) v_{ij} &\le& -\frac{1}{p} \frac{1}{v} \sum_{i,k} (u^{\delta}_{ik})^2 - \frac{p-2}{p} \frac{1}{v^5} \left(\sum_{i,j} u^{\delta}_i u^{\delta}_j u^{\delta}_{ij}\right)^2\\
&+& \frac{p-1}{p}  \Lambda \frac{1}{v^3} |Dv|^2 v^2 - \frac{1}{v} \sum_{k} D_k f_{\epsilon}u^{\delta}_k - \frac{1}{v}\epsilon^{-2} \zeta' |Du^{\delta}|^2
\end{eqnarray*}
Note that
$$
	-\frac{1}{p} \frac{1}{v} \sum_{i,k} (u^{\delta}_{ik})^2 - \frac{p-2}{p} \frac{1}{v^5} \left(\sum_{i,j} u^{\delta}_i u^{\delta}_j u^{\delta}_{ij}\right)^2 \le 0
$$
since
$$
	\frac{1}{p} \sum_{i,k} (u^{\delta}_{ik})^2 + \frac{p-2}{p} \left(\sum_{i,j}u^{\delta}_{ij} \frac{u^{\delta}_{i}}{v}\frac{u^{\delta}_{j}}{v}\right)^2 \ge \lambda \sum_{i,k} (u^{\delta}_{ik})^2 \ge 0
$$
and we have

$$
	v_t - \sum_{i,j} a^{p,\delta}_{ij}(Du^{\delta}) v_{ij} \le \frac{1}{v} \left( \Lambda |Dv|^2 + v \cdot \|D f_{\epsilon}\|_{\infty}  + \epsilon^{-2} \zeta' |Du^{\delta}|^2 \right).
$$

In order to estimate the fourth term in \eqref{delta1} we use the fact that $w_i = \xi_i v + \xi v_i + 2\lambda u^{\delta}u^{\delta}_i=0$ in $(x_0,t_0)$ and get 
\begin{eqnarray}
\xi \frac{|Dv|^2}{v} &\le& \frac{v}{\xi} \left(|D \xi|^2 + 4 \mu |u^{\delta}| |D \xi| + 4 (\mu u^{\delta})^2\right)\nonumber\\
&\le& \frac{5v}{\xi} \left( |D \xi|^2 + (\mu u^{\delta})^2\right). \label{delta7}
\end{eqnarray}
Hence
\begin{eqnarray*}
	-2 \sum_{i,j} a^{p,\delta}_{ij}(Du^{\delta}) \xi_j v_i &\le& \frac{2v}{\xi} \frac{1+|p-2|}{p} |D \xi|^2 + \frac{4v}{\xi} \frac{1+|p-2|}{p} \mu u^{\delta} |D \xi|\\
	&\le& 4\frac{1+ |p-2|}{p} \frac{v}{\xi} \left(|D \xi|^2 + (\mu u^{\delta})^2\right)
\end{eqnarray*}
Moreover, using Young's inequality $a \cdot b \le \frac{1}{2} (a^2 +b^2)$ we obtain an estimate for the second term in \eqref{delta1}
\begin{eqnarray*}
	v \left(\xi_t - \sum_{i,j} a^{p,\delta}_{ij}(Du^{\delta}) \xi_{ij}\right) &\le& v |\xi_t| + \frac{n}{p} v |D^2 \xi| + \frac{|p-2|}{p} v |D^2 \xi| \\
	&\le& \eta \lambda v^2 + \frac{1}{4 \lambda \eta} \left[|\xi_t| + \frac{n+|p-2|}{p} |D^2 \xi| \right]^2
\end{eqnarray*}
with $a \colon= \sqrt{2 \eta \mu} \cdot v$ and $b \colon= \sqrt{\frac{1}{2 \eta \mu}} \left(|\xi_t|+ \frac{n+|p-2|}{p} |D^2 \xi|\right)$ for some $\eta >0$.  With all these inequalities and using \eqref{delta7} we obtain for the fifth term in \eqref{delta1}
\begin{eqnarray*}
	2 \lambda \sum_{i,j} a^{p,\delta}_{ij}(Du^{\delta}) u^{\delta}_i u^{\delta}_j &=& 2 \lambda |Du^{\delta}|^2 \left(\frac{1}{p} + \frac{p-2}{p} \frac{|Du^{\delta}|^2}{|Du^{\delta}|^2 + \delta}\right)\\
	&\le& \frac{1+|p-2|}{p}\frac{5v}{\xi} \left(|\nabla \xi|^2 + (\lambda u^{\delta})^2\right) + \eta \mu v^2 \\
	&+& \frac{1}{4 \eta \mu} \left(|\xi_t| + \frac{n+p-2}{p} |D^2 \xi|\right)^2 \\
	&+& 4 \frac{1+|p-2|}{p} \frac{v}{\xi} \left(|\nabla \xi|^2 + (\mu u^{\delta})^2\right) - 2 \mu u^{\delta} \left(\zeta_{\varepsilon}(u^{\delta}) + f_{\varepsilon}\right)\\
	&-& \frac{\xi}{v} \sum_{k} \left\{D_k f_{\varepsilon} \cdot u^{\delta}_k + \varepsilon^{-2} \zeta' (u^{\delta}_k)^2\right\} \\
	&\le& 10 \frac{v}{\xi} \left(|\nabla \xi|^2 + (\mu u^{\delta})^2\right) + \eta \mu v^2 \\
	&+& \frac{1}{4\eta \mu} \left(|\xi_t| + \frac{n+|p-2|}{p} |D^2 \xi|\right) -2 \mu u \left(\zeta_{\varepsilon}(u^{\delta}) + f_{\varepsilon}\right)\\
	&-& \frac{\xi}{v} \sum_{k} \left\{D_k f_{\varepsilon} \cdot u_k + \varepsilon^{-2} \zeta' u^2_k\right\} .
\end{eqnarray*}
Now, using the Young's inequality with $a=10 \frac{1}{\sqrt{2 \eta \mu}} \frac{(|\nabla \xi|^2 + (\lambda u^{\delta})^2)}{\xi}$ and $b = \sqrt{2 \eta \mu} \cdot v$ and noting that, if $|\nabla u^{\delta}(x_0,t_0)| \ge 1$ and $0 < \delta \le 1/2$ , we can bound from below the expression
\begin{eqnarray*}
	\left(\frac{8}{9} \frac{p-1}{p}\right) \mu v^2 \le 2 \mu |\nabla u^{\delta}|^2 \left(\frac{1}{p} + \frac{p-2}{p} \frac{|\nabla u^{\delta}|^2}{|\nabla u^{\delta}|^2 + \delta}\right).
\end{eqnarray*}
Thus, we get that
\begin{eqnarray}
\left(\frac{8}{9} \frac{p-1}{p} - 2 \eta \right) \xi^2  v^2 &\le& \frac{25}{\eta \mu^2} \left(\left(|\nabla \xi|^2 + (\mu u^{\delta})^2\right)^2 \right)\nonumber\\
&+& \frac{1}{4 \eta \mu^2} \xi^2 \left(|\xi_t| + \frac{n+|p-2|}{p} |D^2 \xi |\right)^2+ 2 \xi^2 u^{\delta} \left(\zeta_{\varepsilon}(u^{\delta}) + f_{\varepsilon}\right)\nonumber\\
	&+& \frac{\xi^3}{v}\mu^{-1} \sum_{k} \left\{D_k f_{\varepsilon} \cdot u^{\delta}_k + \varepsilon^{-2} \zeta' (u^{\delta}_k)^2\right\}. \label{Equ1}
\end{eqnarray}
The coefficient $\left(\frac{8}{9} \frac{p-1}{p} - 2 \eta\right)$ is supposed to be positive because we divide by it and want to preserve the same direction of the inequality. Note that, if $p \to 1$ the coefficient becomes negative. But for fixed $p>1$, we can choose $\eta=\eta(p)>0$ such that 
$$
	\left(\frac{8}{9} \frac{p-1}{p} - 2 \eta\right) \ge \frac{5}{9} \frac{p-1}{p} \colon= \frac{1}{c(p)}.
$$ 
Now, note that,
\begin{eqnarray}
2 \xi^2 u^{\delta} \left(\zeta_{\varepsilon}(u^{\delta}) + f_{\varepsilon}\right) &\ge& 0. \label{Eq2}
\end{eqnarray}
and
\begin{eqnarray}
	 \frac{\xi^3}{v}\mu^{-1} \sum_{k} \left\{D_k f_{\varepsilon} \cdot u^{\delta}_k + \varepsilon^{-2} \zeta' (u^{\delta}_k)^2\right\} &=&  \frac{\xi^2}{v}\mu^{-1}\left( \sum_{k} D_k f_{\varepsilon} \cdot u^{\delta}_k + \varepsilon^{-2} \zeta' |\nabla u^{\delta}|^2 \right) \nonumber\\
	 &\le& \frac{\xi^2}{v} \mu^{-1} \left(\|\nabla f_{\varepsilon}\| \cdot |\nabla u^{\delta}| + \varepsilon^{-2} |\zeta'||\nabla u^{\delta}|^2\right) \nonumber\\
	 &\le& \frac{2}{\Gamma} \xi^2 \left(\|\nabla f_{\varepsilon}\| \cdot \frac{|\nabla u^{\delta}|}{v} + 2 \sup|\zeta'| \frac{|\nabla u^{\delta}|^2}{v}\right) \nonumber\\
	 &\le& \frac{2}{\Gamma} \xi^2 \left(\|\nabla f_{\varepsilon}\| + 2 \sup |\zeta'| |\nabla u^{\delta}|^2 \right). \label{Eq3}
\end{eqnarray}
Substituting \eqref{Eq2} and \eqref{Eq3} into \eqref{Equ1}, we obtain
\begin{eqnarray*}
\xi^2 v^2 &\le& \frac{c(p)}{\mu^2} \left[\left(|\nabla \xi|^2 + (\mu u^{\delta})^2\right) + \xi \left(|\xi_t| + \frac{n+|p-2|}{p} |D^2 \xi|^2 \right)\right]^2 \\
&+& \frac{2 \cdot c(p)}{\Gamma} \xi^2 \|\nabla f_{\varepsilon}\| + \frac{2 \cdot c(p)}{\Gamma} \sup |\zeta'| \xi^2  v^2.
\end{eqnarray*}
Thus,
\begin{eqnarray*}
\xi^2 \left(1- \frac{2 \cdot c(p)}{\Gamma} \sup |\zeta'|\right) v^2 &\le& \frac{c(p)}{\mu^2} \left[\left(|\nabla \xi|^2 + (\mu u^{\delta})^2\right) + \xi \left(|\xi_t| + \frac{n+|p-2|}{p} |D^2 \xi|^2 \right)\right]^2  \\&+& 
 + \frac{2 \cdot c(p)}{\Gamma}  \|\nabla f_{\varepsilon}\| .
\end{eqnarray*}
Therefore, we can choose $\Gamma =\Gamma(p)>0$  such that 
$$
	1 - \frac{2 \cdot c(p)}{\Gamma} \|\zeta'\|_{\infty} \ge \frac{1}{2}.
$$
This leads to the following inequality, at $(x_0,t_0)$
\begin{eqnarray*}
\xi^2 v^2 &\le& \frac{c(p)}{\mu^2} \left[\left(|\nabla \xi|^2 + (\mu u^{\delta})^2\right) + \xi \left(|\xi_t| + \frac{n+|p-2|}{p} |D^2 \xi|^2 \right)\right]^2+ \kappa(p) \\
\end{eqnarray*}
 where 
$
	\kappa(p) \colon= \frac{2 \cdot c(p)}{\Gamma}  \|\nabla f_{\varepsilon}\|_{\infty} .
$
Note that the constant $c(p)$  blows up for $p \to 1$. 
 Now, fix $(x,t) \in \Omega_T$ and choose $\xi \in (0,1)$ such that $\xi(x_0,t_0)=1$ and
$$
	\max \left\{\|D^2 \xi\|_{\infty},\|\nabla \xi\|_{\infty}, \|\xi_t\|_{\infty}\right\} \le \frac{1}{\textrm{dist}((x,t), \partial_p \Omega_T)}.
$$
Then
\begin{eqnarray*}
	|\nabla u^{\delta}(x,t)| &\le& w(x,t) \le w(x_0,t_0) = \xi(x_0,t_0) v(x_0,t_0) + \frac{\Gamma}{2 \varepsilon^2} (u^{\delta}(x_0,t_0))^2 \\
	&\le& \frac{C(p,n)}{\mu} \left(\|D^2 \xi\|_{\infty} + \Gamma + \|\nabla \xi\|^2_{\infty} + \|\xi_t\|_{\infty}\right)+\kappa(p)\\
	&\le& C(n,p) \left(1 + \frac{1}{\textrm{dist}((x,t), \partial_p \Omega_T)} +\frac{1}{\textrm{dist}((x,t), \partial_p \Omega_T)^2}\right).
\end{eqnarray*}
We consider two cases. First we suppose that $\textrm{dist}((x,t), \partial_p \Omega_T) \le 1$. This implies
\begin{eqnarray*}
	|\nabla u^{\delta}(x,t)| &\le& C(n,p) \left(1 + \frac{2}{\textrm{dist}((x,t), \partial_p \Omega_T)^2}\right)\\
	&\le& C(n,p) \left(1 + \frac{1}{\textrm{dist}((x,t), \partial_p \Omega_T)^2}\right)
\end{eqnarray*}
For the sake of simplicity let $C(n,p)$ be a generic constant. We obtain, for $\textrm{dist}((x,t), \partial_p \Omega_T) \ge 1$,
\begin{eqnarray*}
	|\nabla u^{\delta}(x,t)| &\le& C(n,p) \left(2 + \frac{1}{\textrm{dist}((x,t), \partial_p \Omega_T)^2}\right)\\
	&\le& C(n,p) \left(1 + \frac{1}{\textrm{dist}((x,t), \partial_p \Omega_T)^2}\right)
\end{eqnarray*}
Finally we treat the case when the maximum point $(x_0,t_0)$ of $w$ is attained on the
parabolic boundary $\partial_p \Omega_T$ , then
\begin{eqnarray*}
	|\nabla u^{\delta}(x,t)| &\le& v(x,t) \le w(x,t) \le w(x_0,t_0) = \mu (u^{\delta}(x_0,t_0))^2\\
	&\le& \|\varphi\|_{\infty} \le \bar C(p,n,\|\varphi\|_{\infty}) \left(1 + \frac{1}{\textrm{dist}((x,t), \partial_p \Omega_T)^2}\right)
\end{eqnarray*}
because $\xi \equiv 0$ on $\partial_p \Omega_T$.
\end{proof}

Using the Proposition \ref{p4.22}, we obtain the following result 

\begin{corollary}\label{p4.1}
If  $\{u_{\varepsilon}\}_{\varepsilon >0}$ is a family of solutions of \eqref{Equation Pe}, and $\eqref{E1}$-$\eqref{E2}$ hold, then there exists a constant $\bar L=L(\tau,\lambda,\Lambda,c_0,c_1, \|\nabla f\|_{\infty}, \|\varphi\|_{\infty}, K)>0$, independent of  $\varepsilon\in(0,1)$, such that
$$
| \nabla u_{\varepsilon}(x,t)| \le \bar L  \left(1+\frac{1}{\textrm{dist}((x,t), \partial_p \Omega_T)^2}\right),\,\, \forall(x,t) \in \Omega_T.
$$
\end{corollary}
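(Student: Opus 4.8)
The plan is to deduce the estimate for $u_{\varepsilon}$ by passing to the limit $\delta\to 0^{+}$ in the bound \eqref{M1} of Proposition \ref{p4.22}, which is uniform in $\delta$. The only structural point is to know that $u_{\varepsilon}$ is the locally uniform limit of the smooth solutions $u_{\varepsilon,\delta}$ of the regularized problems \eqref{sol3}; once this is in hand, the estimate is transferred to the limit by an elementary segment integration.

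\textbf{Step 1: locally uniform convergence $u_{\varepsilon,\delta}\to u_{\varepsilon}$.} Fix $\varepsilon\in(0,1)$. By the existence theorem above, the half-relaxed limits $u^{\star}_{\varepsilon}$ and $u_{\star,\varepsilon}$ of the family $\{u_{\varepsilon,\delta}\}_{\delta}$ are, respectively, a viscosity supersolution and a viscosity subsolution of \eqref{Equation Pe}, they both equal $\varphi$ on $\partial_{p}\Omega_{T}$, and $u_{\star,\varepsilon}\le u^{\star}_{\varepsilon}$ by construction. The right-hand side of \eqref{Equation Pe} is $\zeta_{\varepsilon}(\cdot)+f_{\varepsilon}$, which is strictly positive since $\zeta_{\varepsilon}\ge 0$ and $f_{\varepsilon}\ge c_{0}>0$ by \eqref{E1}; hence the comparison principle of Proposition \ref{Prop1} applies and gives $u^{\star}_{\varepsilon}\le u_{\star,\varepsilon}$ in $\overline{\Omega_{T}}$. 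Therefore $u^{\star}_{\varepsilon}=u_{\star,\varepsilon}=:u_{\varepsilon}$, and by the standard characterization of half-relaxed limits (coincidence of the upper and lower relaxed limits, together with continuity of the common value) the convergence $u_{\varepsilon,\delta}\to u_{\varepsilon}$ is locally uniform in $\Omega_{T}$ as $\delta\to 0^{+}$.

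\textbf{Step 2: passing the gradient bound to the limit.} Fix $(x,t)\in\Omega_{T}$ and set $d:=\textrm{dist}((x,t),\partial_{p}\Omega_{T})$. For any $y$ with $|x-y|<d/2$, the segment $[x,y]\times\{t\}$ stays in $\Omega_{T}$ at distance at least $d/2$ from $\partial_{p}\Omega_{T}$, so integrating \eqref{M1} along this segment yields $|u_{\varepsilon,\delta}(x,t)-u_{\varepsilon,\delta}(y,t)|\le \bar L\bigl(1+4d^{-2}\bigr)|x-y|$ for every $\delta\in(0,1/2)$. Letting $\delta\to0^{+}$ and using Step 1, the same inequality holds with $u_{\varepsilon}$ in place of $u_{\varepsilon,\delta}$. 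Since this is valid for all $y$ in a neighborhood of $x$, the continuous function $u_{\varepsilon}(\cdot,t)$ is Lipschitz near $x$ with $|\nabla u_{\varepsilon}(x,t)|\le \bar L\bigl(1+4d^{-2}\bigr)$ (at points of differentiability, and as a genuine pointwise slope bound by continuity); absorbing the factor $4$ into the constant exactly as at the end of the proof of Proposition \ref{p4.22} (treating the cases $d\le 1$ and $d\ge 1$ separately) gives the asserted bound, with a constant of the same dependence and independent of $\varepsilon$.

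\textbf{Main obstacle.} The only nontrivial ingredient is Step 1, i.e. the identification of the two half-relaxed limits so that the $\delta$-uniform bound may be passed to the limit; this hinges on having a comparison principle for \eqref{Equation Pe}, which is available here precisely because $f_{\varepsilon}\ge c_{0}>0$ forces the right-hand side to be strictly positive. The remainder is the routine ``integrate the gradient estimate along a segment and let $\delta\to0$'' argument.
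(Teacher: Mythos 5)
Your Step 2 is fine and is essentially the paper's argument in a slightly different dress: the paper also passes the $\delta$-uniform bound of Proposition \ref{p4.22} to the limit, using the locally uniform convergence to get a local Lipschitz bound for $u_{\varepsilon}$, then Rademacher and the Lebesgue--Besicovitch differentiation theorem to turn it into the pointwise gradient estimate; your segment-integration version accomplishes the same thing.

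The genuine gap is in Step 1. Proposition \ref{Prop1} is a comparison principle for the equation $\mathcal{L}v - v_t = g(X,t)$ with a \emph{fixed} continuous right-hand side $g>0$; in \eqref{Equation Pe} the right-hand side is $\zeta_{\varepsilon}(u_{\varepsilon})+f_{\varepsilon}$, which depends on the unknown. When you compare the subsolution and supersolution obtained as half-relaxed limits, the zeroth-order term $\zeta_{\varepsilon}(\cdot)$ enters, and since $\zeta_{\varepsilon}$ is a bump supported in $(0,\varepsilon)$ it is \emph{not} monotone in $u$, so the equation is not proper and Proposition \ref{Prop1} cannot be invoked as stated; positivity of $\zeta_{\varepsilon}(u)+f_{\varepsilon}$ does not cure this. (Handling a Lipschitz, non-monotone zeroth-order term would require an additional argument -- e.g.\ an exponential-in-time change of variables adapted to this singular operator -- which the paper never establishes; note also that such a comparison principle would give uniqueness for \eqref{Equation Pe}, which the paper carefully does not claim.) You also assert without justification that both half-relaxed limits attain the boundary datum $\varphi$ on $\partial_p\Omega_T$, which needs barriers, and your attribution of which limit is the sub- and which the supersolution, and which inequality comes from the definition versus from comparison, is swapped (a cosmetic point, but symptomatic). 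The paper sidesteps all of this: it simply quotes the locally uniform convergence $u_{\varepsilon,\delta}\to u_{\varepsilon}$ from Section 3.2 of \cite{Does} rather than deriving it from a comparison principle. So either cite that convergence result, or supply a comparison theorem valid for the semilinear equation with the $\zeta_{\varepsilon}(u)$ term; as written, Step 1 does not go through.
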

\begin{proof}
Fix $\epsilon >0$. Initially, note that  the approximating functions $u_{\varepsilon, \delta}$ converge locally uniformly to the viscosity solution $u_{\varepsilon}$ of the singular perturbation problem (see Section 3.2 in \cite{Does})
$$
	\mathcal{L}u_{\varepsilon} - \partial_t u_{\varepsilon}=f_{\epsilon} + \zeta_{\epsilon}(u).
$$
as $\delta \to 0$. Since $u_{\varepsilon, \delta}$ is uniformly locally Lipschitz continuous, there exists for every $x \in \Omega$ a neighborhood $U_x \subset \Omega$ of $x$ and a constant $L>0$ such that
\begin{eqnarray*}
	|u_{\varepsilon}(y,t) - u_{\varepsilon}(z,t)| &\le& |u_{\varepsilon}(y,t)-u_{\varepsilon, \delta}(y,t)| + |u_{\varepsilon,\delta}(y,t) - u_{\varepsilon,\delta}(z,t)|\\
	& +& |u_{\varepsilon, \delta}(z,t)- u_{\varepsilon}(z,t)| 
	\le 2 \tilde{\epsilon}(\delta) + L\cdot |y-z|
\end{eqnarray*}
for all $y,z \in U_x$ and fixed $t$, where $\tilde{\epsilon} \to 0$ for $\delta \to 0$ and $L$ independent of $\varepsilon$ and $\delta$. Then Rademacher's Theorem, e.g., stated in \cite{16}, provides that the gradient $\nabla u_{\varepsilon}(x,t)$ exists almost everywhere in $\Omega_T$. Let $(x,t) \in \Omega_T$ be a point where the gradient of $u$ exists and $B_{r}(x,t) \Subset \Omega_T$ be a ball of radius $r>0$ around $(x,t)$, then by Proposition \ref{p4.22}
$$
	\left| \nabla u_{\varepsilon, \delta}(y,s)\Big|_{B_r(x,t)}^{} \right| \le \bar C \left( 1 + \frac{1}{\min_{(y,s) \in B_r(x,t)} (\textrm{dist}((y,s), \partial_p \Omega_T))^2}\right)
$$
for all $(y,s) \in B_r(x,t)$. Passing to the limit $\delta \to 0$;
$$
	\left| \nabla u_{\varepsilon}(y,s)\Big|_{B_r(x,t)}^{} \right| \le \bar C \left( 1 + \frac{1}{\min_{(y,s) \in B_r(x,t)} (\textrm{dist}((y,s), \partial_p \Omega_T))^2}\right)
$$
in points $(y,s)$ where the gradient exists since the right hand side is independent of $\delta$.  Thus, due to the Lebesgue-Besicovitch Differentiation Theorem in \cite{16}, we have for almost every point $(x,t) \in \Omega_T$,
\begin{eqnarray*}
	|\nabla u_{\varepsilon}(x,t)| &\le& \lim_{r \to 0} \intav{B_r(x,t)} |\nabla u_{\varepsilon}(y,s)| d(y,s) \\
	&\le& \bar C \lim_{r \to 0}  \left( 1 + \frac{1}{\min_{(y,s) \in B_r(x,t)} (\textrm{dist}((y,s), \partial_p \Omega_T))^2}\right)\\
	&=& \bar C  \left(1+\frac{1}{\textrm{dist}((x,t), \partial_p \Omega_T)^2}\right).
\end{eqnarray*}

\end{proof}

As an immediate consequence we have the following result.

\begin{corollary}[Local Lipschitz regularity]\label{c4.1}
Let $\{u_{\varepsilon}\}_{\varepsilon >0}$ be a family of  solutions of \eqref{Equation Pe}. Let $ K \subset\Omega_T$ be a compact set and $\tau >0$ be such that $\mathcal{N}^{-}_{\tau}(K) \subset\Omega_T$. If $\eqref{E1}$-$\eqref{E2}$ hold, then there exists a constant $L=L(\tau,\lambda,\Lambda,c_0,c_1, \|\nabla f\|_{\infty}, \|\varphi\|_{\infty}, K)>0$ such that
$$
|\nabla u_{\varepsilon}(x,t)| \le L, \quad \forall(x,t) \in K.
$$
\end{corollary}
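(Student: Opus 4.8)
The plan is to read off the estimate directly from Corollary \ref{p4.1}, the only genuine point being a lower bound on the space--time distance from points of $K$ to the parabolic boundary $\partial_p\Omega_T$.

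First I would fix an arbitrary $(x,t)\in K$. The hypothesis $\mathcal{N}^{-}_{\tau}(K)\subset\Omega_T$ means precisely that the backward cylinder $Q^{-}_{\tau}(x,t)=B_{\tau}(x)\times(t-\tau^2,t]$ is contained in $\Omega_T=\Omega\times(0,T]$. Hence $B_{\tau}(x)\subset\Omega$, so that $\textrm{dist}(x,\partial\Omega)\ge\tau$, and $(t-\tau^2,t]\subset(0,T]$, so that $t\ge\tau^2$. Since $\partial_p\Omega_T=\Sigma\cup(\Omega\times\{0\})$ with $\Sigma=\partial\Omega\times(0,T)$, every point of $\partial_p\Omega_T$ is then at distance at least $\min\{\tau,\tau^2\}$ from $(x,t)$; that is, $\textrm{dist}((x,t),\partial_p\Omega_T)\ge\min\{\tau,\tau^2\}>0$. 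Replacing $\tau$ by $\min\{\tau,1\}$ if necessary one may simply read this as $\textrm{dist}((x,t),\partial_p\Omega_T)\ge\tau^2$.

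Next I would apply Corollary \ref{p4.1}: since $\{u_{\varepsilon}\}_{\varepsilon>0}$ solves \eqref{Equation Pe} and \eqref{E1}--\eqref{E2} hold, there is a constant $\bar L=\bar L(\tau,\lambda,\Lambda,c_0,c_1,\|\nabla f\|_{\infty},\|\varphi\|_{\infty},K)>0$, independent of $\varepsilon\in(0,1)$, such that $|\nabla u_{\varepsilon}(x,t)|\le\bar L(1+\textrm{dist}((x,t),\partial_p\Omega_T)^{-2})$ for every $(x,t)\in\Omega_T$. Combining this with the distance bound from the previous step yields $|\nabla u_{\varepsilon}(x,t)|\le\bar L(1+\min\{\tau,\tau^2\}^{-2})=:L$ for all $(x,t)\in K$, and the right-hand side depends only on $\tau,\lambda,\Lambda,c_0,c_1,\|\nabla f\|_{\infty},\|\varphi\|_{\infty}$ and $K$, in particular not on $\varepsilon$ nor on the chosen point of $K$, which is exactly the assertion.

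I do not expect any real obstacle here: all the analytic work is already contained in Corollary \ref{p4.1}, hence ultimately in the Bernstein-type argument behind Proposition \ref{p4.22}, and the present statement merely repackages that interior gradient bound as a uniform-in-$\varepsilon$ bound on a compact subset via the elementary geometric estimate above. The one place to be slightly careful is that the cylinders $Q^{-}_{\tau}$ are backward in time, so the distance estimate must account separately for the lateral boundary $\Sigma$ (governed by $B_{\tau}(x)\subset\Omega$) and the initial slice $\Omega\times\{0\}$ (governed by $t\ge\tau^2$); both cases are subsumed in the single lower bound $\min\{\tau,\tau^2\}$.
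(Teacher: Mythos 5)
Your argument is correct: the hypothesis $\mathcal{N}^{-}_{\tau}(K)\subset\Omega_T$ does give $\mathrm{dist}((x,t),\partial_p\Omega_T)\ge\min\{\tau,\tau^2\}$ for every $(x,t)\in K$ (spatial separation $\ge\tau$ from the lateral boundary, temporal separation $\ge\tau^2$ from the initial slice), and plugging this into the global interior bound of Corollary \ref{p4.1} immediately yields a constant $L$ independent of $\varepsilon$ and of the point. The paper proceeds differently: for $(x_0,t_0)\in K$ it introduces the parabolic rescaling $w_{\varepsilon,r}(x,t)=\frac{1}{r}u_{\varepsilon}(x_0+rx,t_0+r^2t)$, $r\in(0,\tau)$, notes that $w_{\varepsilon,r}$ solves $\mathcal{L}w_{\varepsilon,r}-\partial_t w_{\varepsilon,r}=\zeta_{\varepsilon/r}(w_{\varepsilon,r})+rf_{\varepsilon}=:g_{\varepsilon}$ in $B_1\times(-1,0)$, and then invokes Corollary \ref{p4.1} for the rescaled function on the unit cylinder, where the distance to the parabolic boundary is of order one at the center; since $\nabla w_{\varepsilon,r}(0,0)=\nabla u_{\varepsilon}(x_0,t_0)$, the bound follows. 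Your route is the more elementary of the two — it uses nothing beyond Corollary \ref{p4.1} and a distance estimate — and it is entirely adequate for the statement as written. What the paper's scaling buys is bookkeeping for later use: the same functions $w_{\varepsilon,r}$ and $g_{\varepsilon}$, together with the explicit transformation of the right-hand side under rescaling, are reused verbatim in the proof of Proposition \ref{p4.2} (uniform H\"older continuity in time), so the scaling version localizes the estimate in a form that is ready for that argument, whereas your version would require setting up the rescaling there anyway.
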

\begin{proof}
For $(x_0,t_0) \in K$, consider the function
$$
    w_{\varepsilon,r}(x,t) := \frac{1}{r} u_{\varepsilon}(x_0 + rx, t_0+r^2 t).
$$
For $r\in(0,\tau)$ we have that $w_{\varepsilon,r}$ is a solution of
\begin{eqnarray*}
\mathcal{L} w_{\varepsilon,r}- \partial_tw_{\varepsilon,r} &=& \zeta_{\varepsilon / r} (w_{\varepsilon,r}) + rf_{\varepsilon}\\
&\colon=&g_\varepsilon(x,t)
\end{eqnarray*}
in $B_{1} \times (-1,0)$. The result now follows from Corollary \ref{p4.1}.
\end{proof}

\subsection{Uniform regularity in time.}
Next, as was mentioned above, using the uniform Lipschitz continuity in the space variables, we obtain the uniform H\"{o}lder continuity in time. First, we need the following lemma.
\begin{lemma}\label{l4.1}
Let $0 \le u \in C(\overline{B}_{1}(0) \times [0, 1/(4n+M_0)])$ be such that
$$
|\mathcal{L}u-\partial_tu| \leq M_{0}\quad \textrm{in} \quad \{u>1\},
$$
for some $M_0 >0$, and $|\nabla u| \leq L$, for some $L>0$. Then there exists a constant $C=C(L)$ such that
$$
|u(0,t)-u(0,0)| \leq C, \quad \textrm{if} \quad 0 \leq t \leq \frac{1}{4n + M_0}.
$$
\end{lemma}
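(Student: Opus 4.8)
The plan is to compare $u$ with explicit barriers at the point $(0,t)$ over the short time interval and use the uniform Lipschitz bound to control the spatial oscillation of $u$ at the initial time. First I would note that by the gradient bound $|\nabla u| \le L$ we control $u$ on $B_1(0)$ at each time slice in terms of its value at the center. The key dichotomy is whether $u(0,0) \le 1 + L$ or $u(0,0) > 1 + L$. If $u(0,0) \le 1 + L$, then near the center point $u$ stays of order $1+L$ at time zero, and the conclusion $|u(0,t) - u(0,0)| \le C$ for $t$ small will follow once we bound $u(0,t)$ from above, which is the heart of the argument. If instead $u(0,0) > 1 + L$, then $u(x,0) > 1$ for all $x$ in a definite ball $B_\rho(0)$ with $\rho$ comparable to $1$ (since $u(x,0) \ge u(0,0) - L|x| > 1$ there), so the equation $|\mathcal{L}u - \partial_t u| \le M_0$ holds in a whole parabolic cylinder around $(0,0)$ as long as $u > 1$ persists, and one runs a standard barrier comparison there.

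The main step is the upper barrier in the regime where $u$ may touch the level $\{u = 1\}$. I would fix the point $(0,0)$ and consider, on $Q = B_{1/2}(0) \times [0, t^\ast]$ with $t^\ast = 1/(4n+M_0)$, a comparison function of the form
\[
    \psi(x,t) := u(0,0) + 1 + L + A|x|^2 + B t,
\]
with constants $A, B$ to be chosen depending only on $L$, $n$, $M_0$. On the part of $\partial_p Q$ at time zero, $\psi(x,0) \ge u(0,0)+1+L \ge u(0,0) + L|x| \ge u(x,0)$ using the Lipschitz bound and $|x| \le 1$; on the lateral boundary $|x| = 1/2$ one needs $A/4$ to beat $L/2$, so $A$ is chosen $\ge 2L$. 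In the region $\{u > 1\}$, a direct computation with the operator $\mathcal{L}$ (recalling $\mathcal{L}\psi = \frac1p \Delta \psi + \frac{p-2}{p}\Delta^N_\infty \psi$ and that for $\psi = A|x|^2 + \ldots$ one has $\Delta^N_\infty \psi = 2A$, $\Delta \psi = 2An$) gives $\mathcal{L}\psi - \partial_t\psi = 2A\left(\tfrac{n}{p} + \tfrac{p-2}{p}\right) - B = 2A\tfrac{n+p-2}{p} - B$, which is $\le -M_0$ once $B$ is chosen large in terms of $A, n, p, M_0$. Hence $\psi$ is a supersolution in $\{u>1\}$, dominates $u$ on $\partial_p(Q \cap \{u>1\})$ (on the free boundary part, $u = 1 < \psi$), so by Proposition \ref{Prop1} (comparison) $u \le \psi$ in $Q$; evaluating at $(0,t)$ with $t \le t^\ast$ gives $u(0,t) \le u(0,0) + 1 + L + B t^\ast =: u(0,0) + C(L)$. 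The lower bound $u(0,t) \ge u(0,0) - C(L)$ is obtained symmetrically, now using that $u \ge 0$ so the only delicate case is again when $u$ is near $\{u>1\}$, handled with a sub-barrier $\varphi(x,t) = u(0,0) - 1 - L - A|x|^2 - Bt$ truncated at zero, or more simply by observing that wherever $u(0,t) < u(0,0) - (1+L)$ one has $u < 1$ along the segment from $(0,0)$ and a one-dimensional argument in $t$ applies.

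The hard part will be making the comparison argument rigorous at the free boundary $\partial\{u>1\}$, since the equation is only assumed in the open set $\{u > 1\}$ and $u$ is merely continuous there (not known a priori to be a viscosity sub/supersolution of anything across the level set). The clean way around this is to work in the open set $U := Q \cap \{u > 1\}$, note $\partial_p U \subset (\partial_p Q) \cup \{u = 1\}$, verify $\psi \ge u$ on both pieces, and apply the comparison principle on $U$; one must check $\psi - u$ is lower-semicontinuous up to $\partial_p U$ and that the constant $t^\ast = 1/(4n+M_0)$ is exactly what is needed for $Bt^\ast$ to stay bounded by a constant depending only on $L$ after the choices $A = 2L$, $B = C(n,p,M_0)L$ — so the stated time horizon is essentially forced by the barrier computation. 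A secondary technical point is handling the singularity of $\mathcal{L}$ at $\{\nabla\psi = 0\}$, i.e. at $x=0$; but there $\psi$ is smooth with $D^2\psi = 2A\,\mathrm{Id}$, and cases (2)–(3) of Definition \ref{Def1} give $\Delta\psi + (p-2)\lambda_{\max/\min}(D^2\psi) - \psi_t = 2An + 2A(p-2) - B = 2A(n+p-2) - B$, consistent with the non-singular computation up to the harmless factor $p$, so the supersolution property holds at $x=0$ as well.
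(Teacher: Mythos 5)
Your barrier computation is fine as algebra, but the comparison argument cannot be closed as you describe: the genuine gap is the \emph{lateral} boundary of your cylinder, not the free boundary you flag as the hard part. On $\{u=1\}$ your supersolution indeed dominates ($\psi\ge u(0,0)+1+L\ge 1=u$), and at $t=0$ the spatial Lipschitz bound works; but on $\{|x|=1/2\}\times(0,t^\ast)$ the only information available is the spatial bound $u(x,t)\le u(0,t)+L|x|$ at each fixed time, and $u(0,t)$ is exactly the quantity the lemma is trying to control --- no time regularity is assumed. Choosing $A\ge 2L$ does not make $\psi\ge u$ there: a priori $u$ on the lateral boundary at positive times could exceed $u(0,0)$ by an arbitrary amount, so the boundary ordering needed for Proposition \ref{Prop1} fails and the argument is circular. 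The paper's proof is built precisely to avoid this: in Step 1 (whole cylinder inside $\{u>1\}$) it introduces $t_2=\sup\{\bar t:\ |u(0,t)-u(0,t_0)|\le 2L\ \forall\, t\le\bar t\}$, runs the two-sided comparison with $h^{\pm}$ only on $Q_{t_0,t_2}$, where the lateral boundary \emph{is} controlled by $u(0,t_0)+3L$ thanks to the definition of $t_2$, and then uses the resulting strict inequality to push $t_2$ up to $t_1$; Step 2 treats cylinders that meet $\{u\le1\}$ by anchoring at the first and last times at which $u\le1$ somewhere in the ball (at such times $0\le u(0,\cdot)\le 1+L$ by the spatial Lipschitz bound) and applying Step 1 on the two end intervals. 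Your scheme needs an analogous continuation-in-time (or some other mechanism giving lateral control) to be salvageable.

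A second, smaller but real error: your proposed shortcut for the lower bound --- ``wherever $u(0,t)<u(0,0)-(1+L)$ one has $u<1$ along the segment and a one-dimensional argument in $t$ applies'' --- is false, since $u(0,0)$ may be large, in which case $u(0,t)<u(0,0)-(1+L)$ says nothing about reaching the level $1$; and there is no one-dimensional argument in $t$ available because no time regularity is assumed. The symmetric sub-barrier version has the same lateral-boundary gap as the upper bound. (A minor further point: Proposition \ref{Prop1} is stated for $g>0$, so its use with right-hand side $-M_0$ needs a word of justification, though the paper itself is equally brief on this.)
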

\begin{proof}
This lemma is a slight modification of  \cite[Lemma 4.1]{RMR}. Without loss of generality we may assume that $L>1$. We denote
$$
	c(p) \colon= \frac{np \Lambda}{n+p-2}, 
$$ 
where $\Lambda \colon = \max \left\{\frac{1}{p}, \frac{p-1}{p}\right\}$ denotes the greatest eigenvalue. We divide the proof into two steps. \\
{\bf Step 1.} \, First we claim that, if
$$
Q_{t_{0},t_{1}} := B_{1}(0) \times (t_0,t_1) \subset  \{u> 1\} \quad \textrm{for} \quad  t_1-t_0 \leq \frac{1}{4n+M_0},
$$
then
$$
    |u(0,t_1)-u(0,t_0)| \leq 2L.
$$
In fact, let
$$
    h^{\pm}(x,t):= u(0,t_0) \pm L \pm \frac{2L}{\Lambda} c(p)|x|^2 \pm (4nL + M_0)(t-t_0).
$$
Thus, Then for the specific $h^{\pm}$ we obtain
\begin{eqnarray*}
\partial_th^{+} - \mathcal{L}  h^+ &=&  (4nL + M_0) - \left(\frac{n}{p} + \frac{p-2}{p}\right) \frac{4L}{\Lambda}c(p) \\
&=& M_0\\
 \partial_th^{-} - \mathcal{L} h^{-} &=& -(4nL + M_0) + \left(\frac{n}{p} + \frac{p-2}{p}\right)\frac{4L}{\Lambda}c(p) \\
 &=&- M_0
\end{eqnarray*}

Set
$$
	t_2 := \sup_{t_0 \le \bar{t} \le t_1} \{\bar{t} : |u(0,t)-u(0,t_0)| \le 2L, \,\, \forall \, t_0 \le t \le \bar{t}\}.
$$
So  $t_{0} < t_{2} \leq t_{1}$ is such that
$$
    |u(0,t)-u(0,t_0)| \leq 2L, \quad \textrm{for} \quad t \in [t_0,t_2).
$$
Moreover, from the Lipschitz continuity in space, one has
$$
    h^{-} \leq u \leq h^{+} \quad \textrm{on}\quad\partial_{p}Q_{t_0,t_2}.
$$
On the other hand,
\begin{eqnarray*}
    \partial_th^{-} - \mathcal{L} h^{-}&\leq& -M_0 \leq \partial_tu - \Delta^N_p u\\
     &\leq& M_{0} \leq \partial_th^{+} - \mathcal{L} h^{+}.
\end{eqnarray*}
Therefore, by comparison principle (see Proposition \ref{Prop1})
$$
    h^{-} \leq u \leq h^{+} \quad \textrm{in}\quad Q_{t_0,t_2}.
$$
In particular, since $t_2-t_0 \leq t_1-t_0 \leq \frac{1}{4n+M_0}$ and $L>1$ one has
$$
    |u(0,t_2)-u(0,t_0)| < 2L.
$$
Because of the strict inequality above, we may take $t_2=t_1$ and therefore the claim is proved.\\
{\bf Step 2.}\, Let us consider now the cylinder $Q_{0,t}$ with $0 < t \leq \frac{1}{4n + M_0}$.\\
If $Q_{0,t} \subset \{u>1\}$, we apply Step 1 to get
$$
    |u(0,t)-u(0,0)| \leq 2L.
$$
If $Q_{0,t} \nsubseteq  \{u>1\}$, let $0 \leq t_{1} \leq t_{2} \leq t$ and $x_{1},x_{2} \in \overline{B}_{1}(0)$ be such that
$$
    0 \leq u(x_1,t_1) \leq 1, \,\,\,\, 0 \leq u(x_2,t_2) \leq 1
$$
and
$$
    (\overline{B}_{1}(0) \times (0,t_{1})) \cup (\overline{B}_{1}(0) \times (t_{2},t)) \subset  \{u>1\}.
$$
Then, Step 1 and the Lipschitz continuity in space provide
\begin{eqnarray*}
    |u(0,t)-u(0,0)| &\leq& |u(0,t)-u(0,t_2)| + |u(0,t_2) - u(x_2,t_2)| + |u(x_2,t_2)|\\
    &+& |u(x_1,t_1)| + |u(x_1,t_1)-u(0,t_1)| + |u(0,t_1)-u(0,0)|\\
    & \leq& 2(2L+L+1),
\end{eqnarray*}
which completes the proof.
\end{proof}
We are now ready to prove uniform H\"{o}lder continuity of solutions in time.


\begin{proposition}\label{p4.2}
Let $\{u_{\varepsilon}\}_{\varepsilon >0}$ be a family of solutions of $\eqref{Equation Pe}$. Let $K \subset\Omega_T$ be compact and $\tau >0$ be such that $\mathcal{N}_{2 \tau}(K) \subset \Omega_T$. If $\eqref{E1}$-$\eqref{E2}$ hold, then there exists a constant $C>0$, independent of $\varepsilon$, such that
$$
|u_{\varepsilon}(x,t+\Delta t) -u_{\varepsilon}(x,t)| \leq C |\Delta t|^{1/2},\,\,\,\textrm{ for }(x,t),(x,t+\Delta t) \in K.
$$
\end{proposition}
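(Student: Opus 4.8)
The plan is to deduce the $1/2$-H\"{o}lder continuity in time from the already-established uniform spatial Lipschitz bound (Corollary \ref{c4.1}) by a rescaling argument that reduces matters to Lemma \ref{l4.1}. First I would fix $(x_0,t_0)\in K$ and a small increment $\Delta t$; by enlarging the compact set slightly inside $\mathcal{N}_{2\tau}(K)$ we may assume a fixed parabolic cylinder $Q^-_{2\tau}(x_0,t_0)\subset\Omega_T$ and that Corollary \ref{c4.1} gives $|\nabla u_\varepsilon|\le L$ there, with $L$ independent of $\varepsilon$. We may also assume $|\Delta t|$ is small relative to $\tau^2$; the complementary range $|\Delta t|\gtrsim \tau^2$ is trivial since $u_\varepsilon$ is bounded by $\Upsilon$ (estimate \eqref{R1}), so $|u_\varepsilon(x,t+\Delta t)-u_\varepsilon(x,t)|\le 2\Upsilon\le (2\Upsilon/\tau)|\Delta t|^{1/2}$.

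Next, set $\rho:=|\Delta t|^{1/2}$ and rescale: define
$$
w(y,s):=\frac{1}{\rho}\,u_\varepsilon\bigl(x_0+\rho y,\ t_0+\rho^2 s\bigr),
$$
defined for $(y,s)$ in a fixed cylinder, say $\overline{B}_1(0)\times[0,1/(4n+M_0)]$ once $\rho$ is small enough. A direct computation shows $w$ solves $\mathcal{L}w-\partial_s w=\zeta_{\varepsilon/\rho}(w)+\rho f_\varepsilon$ in this cylinder. Since $\zeta_{\varepsilon/\rho}$ is supported in $[0,\varepsilon/\rho]$, on the set $\{w>1\}$ (which forces $\varepsilon/\rho<1$, i.e. the relevant regime, and otherwise $w\le 1$ already gives $|u_\varepsilon|\le\rho$ pointwise and the estimate is immediate) we have $|\mathcal{L}w-\partial_s w|=|\rho f_\varepsilon|\le \rho c_1\le c_1=:M_0$, so $w$ satisfies the hypothesis $|\mathcal{L}w-\partial_s w|\le M_0$ in $\{w>1\}$. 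Moreover $|\nabla w(y,s)|=|\nabla u_\varepsilon(x_0+\rho y,t_0+\rho^2 s)|\le L$, so the gradient bound of Lemma \ref{l4.1} holds with the same $L$. Applying Lemma \ref{l4.1} at the time $s=1\in[0,1/(4n+M_0)]$ (after, if necessary, a further harmless linear time-rescaling absorbing the factor $1/(4n+M_0)$ into the estimate) yields $|w(0,1)-w(0,0)|\le C(L)$, which unwinds to
$$
|u_\varepsilon(x_0,t_0+\Delta t)-u_\varepsilon(x_0,t_0)|=\rho\,|w(0,1)-w(0,0)|\le C(L)\,\rho=C(L)\,|\Delta t|^{1/2}.
$$

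Since $L$ depends only on $\tau,\lambda,\Lambda,c_0,c_1,\|\nabla f\|_\infty,\|\varphi\|_\infty,K$ and not on $\varepsilon$, and $M_0=c_1$ likewise, the constant $C$ is uniform in $\varepsilon$, and $(x_0,t_0)$ was arbitrary in $K$. The main obstacle I anticipate is bookkeeping around the rescaled forcing and potential: one must check that the rescaled time horizon $1/(4n+M_0)$ contains $s=1$ for the chosen normalization --- this is why one rescales by $\rho=|\Delta t|^{1/2}$ and then possibly dilates time by the fixed factor $(4n+M_0)$, or equivalently applies Lemma \ref{l4.1} on a cylinder of the appropriate fixed height --- and that the case distinctions ($\varepsilon/\rho\ge 1$ versus $<1$, and $|\Delta t|$ large versus small) together with the sign/support properties of $\zeta_{\varepsilon/\rho}$ are handled so that the Lemma's hypotheses genuinely apply on $\{w>1\}$. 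Everything else is the routine scaling identity for $\mathcal{L}-\partial_t$ and an appeal to the uniform gradient bound already in hand.
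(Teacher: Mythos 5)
Your argument is essentially the paper's own proof: it performs the same parabolic rescaling $w_{\varepsilon,r}(x,t)=r^{-1}u_\varepsilon(x_0+rx,t_0+r^2t)$, bounds the rescaled right-hand side on the set where the potential vanishes, invokes the uniform spatial Lipschitz bound (Corollary \ref{c4.1}) together with Lemma \ref{l4.1}, and then chooses $r=\sqrt{(4n+C_\star)\,\Delta t}$ (your ``harmless linear time-rescaling''), disposing of the range $\Delta t\gtrsim\tau^2$ via the $L^\infty$ bound $\Upsilon$, exactly as in the paper. The only slip is the parenthetical claim that being on $\{w>1\}$ ``forces'' $\varepsilon/\rho<1$ and that otherwise $w\le1$ (neither implication holds), but this regime is harmless: if $\varepsilon\ge\rho$ then $\zeta_{\varepsilon/\rho}\le\rho/\varepsilon\le1$, so the forcing is still uniformly bounded on $\{w>1\}$ and Lemma \ref{l4.1} applies with $M_0=1+c_1$ --- indeed the paper itself simply restricts to $\varepsilon\le r$.
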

\begin{proof}
Let $r\in(0,\tau)$, $\varepsilon \le r$, $(x_0,t_0) \in K$ and $w_{\varepsilon,r}(x,t)$, $g_\varepsilon(x,t)$ be as in the proof of Corollary \ref{c4.1}.
From $\eqref{E1}$ and $\eqref{E2}$ we get, in the set $\{ w_{\varepsilon,r} \ge \varepsilon /r \}$,
$$
0 \le g_{\varepsilon}(x,t) \le r f_{\varepsilon}(x) + \frac{r}{\varepsilon} \zeta \left(\frac{r}{\varepsilon} w_{\varepsilon, r}\right) \le  \tau c_1=:C_{\star}.
$$
Also $|\nabla w_{\varepsilon,r}(x,t)|\leq L$. Therefore, we may apply Lemma \ref{l4.1}, with $M_0 = C_{\star}$, to obtain
$$
|w_{\varepsilon,r}(0,t) - w_{\varepsilon,r}(0,0)| \leq C, \quad \textrm{for}\quad 0 \leq t \leq \frac{1}{4n + C_\star},
$$
or in other terms
$$
|u_\varepsilon(x_0,t_0+r^2t)-u_\varepsilon(x_0,t_0)|\leq Cr,\,\,\,\textrm{ for }0\leq t\leq\frac{1}{4n+C_\star}.
$$
In particular, for $r\in(0,\tau)$, one has
\begin{equation}\label{4.3}
\left|u_{\varepsilon} \left(x_0, t_0 + \frac{r^2}{4n + C_\star}\right) - u_{\varepsilon} (x_0,t_0)\right| \leq Cr.
\end{equation}
Now if $(x_0,t_0+\Delta t) \in K$ and $0 < \Delta t < \frac{\tau^2}{4n + C_\star}$, taking $r = \Delta t^{1/2} \sqrt{4n+C_\star}$ in \eqref{4.3} leads to
$$
|u_{\varepsilon}(x_0,t_0 + \Delta t) - u_{\varepsilon}(x_0,t_0)| \leq C\sqrt{4n+C_\star} \Delta t^{1/2}.
$$
On the other hand, if $\Delta t \geq \frac{\tau^2}{4n+C_\star}$, thus we get
$$
|u_{\varepsilon}(x_0,t_0 + \Delta t) - u_{\varepsilon}(x_0,t_0)| \leq 2 \Upsilon \leq \frac{2 \Upsilon}{\tau} \sqrt{4n+C_\star} \Delta t^{1/2},
$$
which completes the proof.
\end{proof}

We are interested in geometric proprieties of the limiting function
$$
	u \colon= \lim_{k \to \infty} u_{\varepsilon_k},
$$
for a subsequence $\varepsilon_{k} \to 0$. From Theorem \ref{tel4.1} the family $\{u_{\varepsilon}\}$ is pre-compact in $\textrm{Lip}(1,1/2)(\Omega_T)$. Hence, up to a subsequence, there exists a limiting function $u$, obtained as the uniform limit of $u_{\varepsilon}$, as $\varepsilon \to 0$. One readily verifies that the  limiting function $u$ satisfies (see Theorem 5.1 in \cite{RMR} )
\begin{enumerate}
\item[(1)] $u$ is a solution of 
\begin{equation} \label{Eq1}
 \mathcal{L} u-u_t=f \cdot \chi_{\{u>0\}},
\end{equation}
 where $f$ is the uniform limit of $f_\varepsilon$, with $f$ satisfing \eqref{E1}-\eqref{E2};
\item[(2)] the function $t \mapsto u(x,t)$ is non-decreasing in time.
\end{enumerate}

\section{Scaling barriers and geometric nondegeneracy}\label{s6}

In this section we establish the exact growth of the solution near the free boundary, from which we deduce the porosity of its time level sets (see Theorem \ref{t6.1}).  The proof is quite similar to those in \cite{RMR}, but for the reader's convenience we decided to give the details.

\begin{lemma}\label{l6.1}
Let $u \in C(Q_1)$ be a viscosity solution to
$$
	\mathcal{L} u - u_t = f \quad \textrm{in} \quad U^+ \colon= \{u>0\}.
$$ 
 Then for every $(z,s) \in \overline{\{u>0\}}$ and $r >0$ with $Q_r(z,s) \subset Q_1$ we have
$$
\sup_{(x,t) \in \partial_p Q^{-}_{r}(z,s)} u(x,t) \ge \mu_0 r^{2} + u(z,s),
$$
where $\mu_0 = \min \left(\frac{pc_0}{4(n+p-2)}, \frac{c_0}{2}\right)$.
\end{lemma}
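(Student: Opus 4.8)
The plan is to run a barrier (touching) argument against the viscosity subsolution property. First I would normalize: since $\mathcal{L}-\partial_t$ has constant coefficients and the bound $f\ge c_0$ is translation invariant, I may translate so that $(z,s)=(0,0)$ (shrinking $r$ slightly and using the continuity of $u$ at the very end, one may also assume $\overline{Q^{-}_{r}(0,0)}\subset Q_1$). It then suffices to treat the case $u(0,0)>0$: if instead $(0,0)\in\partial\{u>0\}$, so $u(0,0)=0$, I would pick $(z_k,s_k)\in\{u>0\}$ with $(z_k,s_k)\to(0,0)$ and $\rho_k\uparrow r$ with $Q_{\rho_k}(z_k,s_k)\subset Q_r(0,0)$, apply the positive case at $(z_k,s_k)$, and pass to the limit using $u(z_k,s_k)\to 0$, the Hausdorff convergence $\partial_p Q^{-}_{\rho_k}(z_k,s_k)\to\partial_p Q^{-}_{r}(0,0)$, and the uniform continuity of $u$ on compact subsets of $Q_1$.

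So assume $u(0,0)>0$ and suppose, for contradiction, that $M:=\sup_{\partial_p Q^{-}_{r}(0,0)}u<\mu_0 r^2+u(0,0)$. I would introduce the smooth barrier
\[
\Phi(x,t):=u(0,0)+\mu_0|x|^2-\mu_0 t,\qquad (x,t)\in\overline{Q^{-}_{r}(0,0)}=\overline{B_r}\times[-r^2,0],
\]
and check three facts: (i) on $\partial_p Q^{-}_{r}(0,0)$, using $-\mu_0 t\ge 0$ on the lateral part and $\mu_0|x|^2\ge 0$ on the bottom $t=-r^2$, one has $\Phi\ge u(0,0)+\mu_0 r^2>M\ge u$; (ii) on $\overline{Q^{-}_{r}(0,0)}\cap\partial\{u>0\}$, where $u=0$, one has $u-\Phi=-u(0,0)-\mu_0|x|^2+\mu_0 t\le -u(0,0)<0$; (iii) $(u-\Phi)(0,0)=0$. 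By (iii) the maximum of the continuous function $u-\Phi$ over the compact set $\overline{Q^{-}_{r}(0,0)}\cap\overline{\{u>0\}}$ is nonnegative, while by (i)--(ii) it is strictly negative on the parabolic boundary of $Q^{-}_{r}(0,0)\cap\{u>0\}$; hence it is attained at some $(x_\ast,t_\ast)$ with $|x_\ast|<r$, $t_\ast\in(-r^2,0]$ and $u(x_\ast,t_\ast)>0$, i.e.\ $\Phi$ touches $u$ from above at an interior point of $\{u>0\}$.

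Since $f\ge c_0$, $u$ is a viscosity subsolution of $\mathcal{L}v-v_t=c_0$ in $\{u>0\}$, so Definition \ref{Def1} applied to the test function $\Phi$ at $(x_\ast,t_\ast)$ forces $\mathcal{L}\Phi(x_\ast,t_\ast)-\Phi_t(x_\ast,t_\ast)\ge c_0$; at the vertex, where $\nabla\Phi=0$ and $\mathcal{L}$ must be read through the frozen equation of Definition \ref{Def1}(2)--(3), this is the same inequality, because $D^2\Phi\equiv 2\mu_0 I$ is a multiple of the identity, so $\lambda_{\max}(D^2\Phi)=\lambda_{\min}(D^2\Phi)=2\mu_0$. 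But $\Delta\Phi=2n\mu_0$, $\Delta^N_{\infty}\Phi=2\mu_0$ and $\Phi_t=-\mu_0$, whence
\[
\mathcal{L}\Phi-\Phi_t=\tfrac{1}{p}(2n\mu_0)+\tfrac{p-2}{p}(2\mu_0)+\mu_0=\frac{2\mu_0(n+p-2)}{p}+\mu_0 .
\]
From $\mu_0\le\frac{pc_0}{4(n+p-2)}$ one gets $\frac{2\mu_0(n+p-2)}{p}\le\frac{c_0}{2}$, and since $n\ge 2$ the minimum defining $\mu_0$ is attained by the first term, $\mu_0=\frac{pc_0}{4(n+p-2)}\le\frac{c_0}{4}$; hence $\mathcal{L}\Phi-\Phi_t\le\frac{c_0}{2}+\frac{c_0}{4}<c_0$, contradicting the inequality above. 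This contradiction yields $\sup_{\partial_p Q^{-}_{r}(z,s)}u\ge\mu_0 r^2+u(z,s)$.

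I expect the only genuinely delicate point to be the vertex $x_\ast$ of the barrier, where $\mathcal{L}$ is defined through the $\lambda_{\max}/\lambda_{\min}$ envelopes of Definition \ref{Def1}: the argument survives precisely because $D^2\Phi$ is a multiple of the identity, so that frozen value coincides with the smooth one. The time drift $-\mu_0 t$ is forced on us — it is what lifts $\Phi$ above $u$ on the bottom face of $Q^{-}_{r}(0,0)$ — and it contributes the extra summand $\mu_0$; the constant $\mu_0=\frac{pc_0}{4(n+p-2)}$ is calibrated so that $\mathcal{L}(\mu_0|x|^2)=\frac{c_0}{2}$, and the restriction $n\ge 2$ is exactly what makes $\mu_0\le\frac{c_0}{4}$, keeping $\frac{2\mu_0(n+p-2)}{p}+\mu_0$ strictly below $c_0$.
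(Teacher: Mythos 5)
Your proposal is correct and is essentially the same quadratic-barrier comparison argument as the paper's proof: where the paper compares $\omega^{\delta}(x,t)=u(x,t)-(1-\delta)u(z,s)$ with the barrier $\psi(x,t)=\tfrac{pc_0}{4(n+p-2)}|x-z|^2-\tfrac{c_0}{2}(t-s)$ (for which $\mathcal{L}\psi-\partial_t\psi=c_0$) via the comparison principle of Proposition \ref{Prop1}, you implement the same comparison by sliding the strictly subcritical barrier $u(0,0)+\mu_0|x|^2-\mu_0 t$ (with $\mathcal{L}\Phi-\Phi_t\le \tfrac{3c_0}{4}<c_0$, using $n\ge 2$) and testing Definition \ref{Def1} directly at the touching point, the vertex case being harmless since $D^2\Phi$ is a multiple of the identity. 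The only point worth making explicit is the possible touching time $t_\ast=0$, where the maximum is only over $t\le 0$; this is covered by the standard fact that parabolic viscosity subsolutions may be tested at backward-in-time maxima, so it is not a genuine gap.
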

\begin{proof}
 Suppose that $(z,s) \in\{u>0\}$, and, for small $\delta>0$, set
\begin{eqnarray*}
\omega^{\delta}(x,t) &\colon=& u(x,t) - (1-\delta)u(z,s) \\
\psi(x,t)&\colon=& \left(\frac{pc_0}{4(n+p-2)}\right)|x-z|^{2} - \left(\frac{c_0}{2}\right)(t-s).
\end{eqnarray*}
Since $D_{ij} \psi = \frac{pc_0}{2(n+p-2)}\delta_{ij}$ then
\begin{eqnarray*}
\mathcal{L} \psi - \partial_t \psi &=& \frac{1}{p} \Delta \psi + \frac{p-2}{p} \left\langle  D^2 \psi \cdot \frac{\nabla \psi}{|\nabla \psi|}, \frac{\nabla \psi}{|\nabla \psi|}\right\rangle\\
&=&\frac{1}{p} \cdot \frac{n p c_0}{2(n+p-2)} +\frac{p-2}{p} \cdot  \frac{ pc_0}{2(n+p-2)} + \frac{c_0}{2} \\
&=&  \frac{n c_0}{2(n+p-2)} + \frac{(p-2) c_0}{2(n+p-2)} + \frac{c_0}{2}\\
&=&  \frac{c_0}{2} + \frac{c_0}{2}=c_0\\
	&\le&  f(x,t)  =  \Delta^N_p u - \partial_t u\\
	&=&\mathcal{L} \omega^{\delta}-\partial_t \omega^{\delta}.
\end{eqnarray*}
Moreover, $\omega^{\delta} \le \psi$ on $\partial\{u>0\}\cap Q^{-}_{r}(z,s)$. Note that we can not have
$$
\omega^{\delta} \le \psi \quad \textrm{on} \quad \partial_{p}Q^{-}_{r}(z,s)\cap\{u>0\},
$$
because otherwise we could apply the comparison principle(see Proposition \ref{Prop1})  to obtain
$$
\omega^{\delta} \le \psi \quad \textrm{in} \quad Q^{-}_{r}(z,s)\cap\{u>0\},
$$
which contradicts the fact that $\omega^{\delta}(z,s) = \delta u(z,s) >0 = \psi(z,s)$. Hence, for $(y,\tau) \in \partial_{p}Q^{-}_{r}(z,s)$ we must have
$$
\omega^{\delta}(y, \tau) > \psi(y,\tau) = \mu_0 r^{2}.
$$
Letting $\delta \to 0$ in the last inequality we conclude the proof.
\end{proof}


\subsection{A class of functions in the unit cylinder}

Next, we establish the growth rate of the solution near the free boundary, which is known for $p$-parabolic variational problems (see \cite{Sha}) but is new in the fully nonlinear framework. We start by introducing a class of functions.
\begin{definition}
We say that a function $u \in C(Q_1)$ is in the class $\Theta$ if $0\le u\le1$ in $Q_1$, 
$0 \le \mathcal{L} u-\partial_t u \le c_1$ in $Q_1$ and $\partial_t u \ge 0$,
in the viscosity sense, with $u(0,0)=0$.
\end{definition}

The following Theorem gives the growth of the elements of the family $\Theta$. This completes a result proved in \cite{RMR} for the fully nonlinear parabolic equations case. 
\begin{theorem} \label{l6.3}
If $u\in \Theta$, then there is a constant $C_0 = C_0(n,p,L,c_1)>0$ such that
$$
	|u(x,t)| \le C_0 (d(x,t))^2, \quad \forall \,\, (x,t) \in Q_{1/2},
$$
where
$$
d(x,t):=
\begin{cases}
  \sup \left\{ r : Q_r(x,t)\subset\{u>0\}\right\}, & \mbox{if } (x,t)\in\{u>0\} \\
  0, & \mbox{otherwise}.
\end{cases}
$$
\end{theorem}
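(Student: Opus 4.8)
The plan is to prove this growth estimate by a contradiction/compactness argument combined with an iteration (dyadic scaling) scheme, following the classical strategy for obstacle-type problems. Suppose the conclusion fails. Then for every $j \in \N$ there is a function $u_j \in \Theta$ and a point $(x_j,t_j) \in Q_{1/2}$ such that
$$
|u_j(x_j,t_j)| > j \, (d_j(x_j,t_j))^2,
$$
where $d_j$ is the distance-like quantity associated to $u_j$. Since $u_j$ is bounded and $u_j(x_j,t_j)>0$, we have $d_j := d_j(x_j,t_j) \in (0, \tfrac12)$, and the inequality forces $d_j \to 0$. The idea is then to zoom in at $(x_j,t_j)$ at the scale $d_j$: define the rescaled functions
$$
v_j(x,t) := \frac{u_j(x_j + d_j x,\, t_j + d_j^2 t)}{u_j(x_j,t_j)}
$$
on the largest cylinder on which this makes sense. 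By construction $v_j(0,0)=1$, $v_j \ge 0$, and $v_j > 0$ on $Q_1$ (because $Q_{d_j}(x_j,t_j) \subset \{u_j>0\}$, by the definition of $d_j$, so after rescaling $Q_1$ sits inside the positivity set). Crucially, the equation transforms as
$$
0 \le \mathcal{L} v_j - \partial_t v_j \le \frac{d_j^2}{u_j(x_j,t_j)} c_1 < \frac{c_1}{j} \to 0,
$$
using the defining inequality $u_j(x_j,t_j) > j d_j^2$, and $\partial_t v_j \ge 0$ is preserved.

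The next step is to promote the pointwise smallness at $(x_j,t_j)$ to uniform control. Here I would invoke the uniform Lipschitz-in-space and H\"older-$\tfrac12$-in-time regularity available for solutions of $\mathcal{L} w - w_t = g$ with bounded $g$ — precisely the content of Corollary \ref{c4.1} and Proposition \ref{p4.2} applied on interior cylinders (the bound $|\nabla u| \le L$ and the time modulus depend only on $n,p,c_1$ and the distance to the boundary of the reference cylinder). This gives, on any fixed $Q_R$, a uniform bound $\|v_j\|_{\Lip(1,1/2)(Q_R)} \le C(R)$ independent of $j$ once $j$ is large (so that $v_j$ is defined on $Q_R$; note the rescaled cylinders exhaust $\R^{n+1}$ since $d_j\to 0$). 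By Arzel\`a--Ascoli and a diagonal argument, along a subsequence $v_j \to v_\infty$ locally uniformly on $\R^{n+1}$, with $v_\infty \ge 0$, $v_\infty(0,0)=1$, $\partial_t v_\infty \ge 0$, and — by the stability of viscosity solutions (Proposition \ref{Prop2}), since the right-hand sides $g_j := \mathcal{L} v_j - \partial_t v_j$ satisfy $0 \le g_j \le c_1/j \to 0$ — the limit solves
$$
\mathcal{L} v_\infty - \partial_t v_\infty = 0 \quad \text{in all of } \R^{n+1},
$$
i.e. $v_\infty$ is a global, nonnegative, ancient solution of the normalized $p$-parabolic equation, monotone nondecreasing in $t$. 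Moreover, by the uniform Lipschitz bound, $|v_\infty(x,t)| \le 1 + C|x| + C|t|^{1/2}$, so $v_\infty$ has at most linear-plus-parabolic growth.

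Finally I would derive a contradiction from a Liouville-type property: a global nonnegative solution of $\mathcal{L} v_\infty - \partial_t v_\infty = 0$ on $\R^n \times \R$ that is nondecreasing in $t$ and grows at most like $\Lip(1,1/2)$ must be constant. Since $v_\infty(0,0)=1$ and $v_\infty \ge 0$, constancy would give $v_\infty \equiv 1$; but one should instead track a \emph{stronger} normalization that fails in the limit. The cleanest way is to run the iteration so that the contradiction is quantitative: rather than a single rescaling, one shows by a dyadic argument that if $S(r):=\sup_{Q_r(x_j,t_j)} u_j$ were to violate the quadratic bound at one scale, then the rescaled solution nearly attains its sup on $\partial_p Q_1^-$ at a value $\gg 1$, contradicting either the nondegeneracy from below (Lemma \ref{l6.1}, which pins the oscillation from below by $\mu_0 r^2$) or, more to the point, the Harnack-type/interior-gradient bound that prevents a normalized $p$-caloric function with $v_\infty(0,0)=1$, $v_\infty \ge 0$, $\partial_t v_\infty\ge0$, subquadratic growth from being unbounded. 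The main obstacle is exactly this Liouville step for the normalized (non-divergence, singular) $p$-parabolic operator $\mathcal{L}$: because $\mathcal{L}$ is only degenerate elliptic and singular at $\nabla v = 0$, one cannot quote the classical linear parabolic Liouville theorem directly; I expect to handle it by combining the interior gradient estimate of Corollary \ref{c4.1} (rescaled) with the monotonicity in $t$ to show $v_\infty$ is independent of $t$, reducing to an \emph{elliptic} Liouville statement for the normalized $p$-Laplacian on $\R^n$ for nonnegative solutions with at most linear growth, which forces $v_\infty$ affine and hence, by nonnegativity on all of $\R^n$, constant — contradicting the strict growth hypothesis retained through the scaling. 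This yields the bound $|u(x,t)| \le C_0 d(x,t)^2$ on $Q_{1/2}$ with $C_0$ depending only on $n,p,L,c_1$.
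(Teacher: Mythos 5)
There is a genuine gap, in fact two, and they sit at the heart of your compactness scheme. First, your normalization $v_j(x,t)=u_j(x_j+d_jx,t_j+d_j^2t)/u_j(x_j,t_j)$ only pins the value of $v_j$ at a single point, $v_j(0,0)=1$; it gives no uniform local $L^\infty$ bound on $v_j$ on a fixed cylinder $Q_R$. The interior $\mathrm{Lip}(1,1/2)$ estimates you want to quote (Corollary \ref{c4.1}, Proposition \ref{p4.2}) have constants that depend on the $L^\infty$ norm of the solution, so they cannot be applied to $v_j$ before you know $\|v_j\|_{L^\infty(Q_R)}$ is bounded; and rescaling the original bound $|\nabla u_j|\le L$ only yields $|\nabla v_j|\le L\,d_j/u_j(x_j,t_j)< L/(j\,d_j)$, which is not uniformly bounded because $d_j$ may tend to zero much faster than $1/j$. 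Without a Harnack-type mechanism (which you gesture at but do not set up) the family $\{v_j\}$ need not be precompact, so the limit $v_\infty$ cannot be extracted. Second, even granting the limit, your own normalization produces no contradiction: a constant limit $v_\infty\equiv 1$ is perfectly compatible with $v_\infty\ge0$, $v_\infty(0,0)=1$, $\partial_t v_\infty\ge0$ and the homogeneous equation. You acknowledge this ("one should instead track a stronger normalization that fails in the limit") and also acknowledge that the Liouville step for the singular operator $\mathcal{L}$ is unresolved; so the argument, as written, does not close.

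The missing idea is exactly the one the paper uses: normalize not by a point value but by the supremum $S(2^{-j-1},u)$ over a half-cylinder, and only along the "good" dyadic scales $j\in H(u)$ where $S(2^{-j},u)\le M\,S(2^{-j-1},u)$. This doubling condition makes the rescaled functions uniformly bounded (so the $C^{1+\alpha,\frac{1+\alpha}{2}}$ estimates of \cite{AM} and stability apply), and it makes the limit nontrivial, $\sup_{Q^-_{1/2}}v=1$, while $v(0,0)=0$, $v\ge0$, $\partial_t v\ge0$; the strong minimum principle for the homogeneous equation then gives $v\equiv0$, a genuine contradiction (Lemma \ref{l6.2}). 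One then upgrades from good scales to all dyadic scales by the iteration tied to the nondegeneracy constant $\mu_0$ of Lemma \ref{l6.1} (this is where $M=4\max(1,1/\mu_0)$ enters), passes to all radii, and finally handles future times in the full cylinder with the explicit barrier $\omega(x,t)=A_1|x|^2+A_2t$, $A_2=\frac{2(n+p-2)}{p}A_1$, and the comparison principle, a step entirely absent from your sketch. If you want to salvage your blow-up framework you would have to replace the point normalization by a sup normalization at controlled scales and supply the quantitative dyadic step; at that point you would essentially be reproducing the paper's proof.
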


To prove Lemma \ref{l6.3}, we need to introduce some notation. Set
$$
S(r,u,z,s):=\displaystyle \sup_{Q^{-}_{r}(z,s)}u.
$$
For $u\in \Theta$, we define
$$
H(u,z,s):= \left\{ j \in \mathbb{N} \cup \{0\} : S(2^{-j},u,z,s) \le M  S(2^{-j-1},u,z,s)\right\},
$$
where $M:=4\max(1,\frac{1}{\mu_0})$, with $\mu_0$ as in Lemma \ref{l6.1}. When $(z,s)$ is the origin, we suppress the point dependence. As in \cite{RMR}, we first show a weaker version of the inequality. 

\begin{lemma}\label{l6.2}
If $u\in \Theta$, then there is a constant $C_1 = C_1(n,c_1)>0$ such that
$$
S(2^{-j-1},u) \le C_12^{-2j},\,\,\,\,\forall j\in H(u).
$$
\end{lemma}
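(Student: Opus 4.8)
The plan is to argue by contradiction, combining the non-degeneracy estimate of Lemma \ref{l6.1} with the supremum-doubling condition that defines the index set $H(u)$. Fix $u \in \Theta$ and suppose the statement fails: then for every $C_1 > 0$ there exist $u$ and $j \in H(u)$ with $S(2^{-j-1}, u) > C_1 2^{-2j}$. Taking a sequence $C_1^{(m)} \to \infty$ produces a sequence $u_m \in \Theta$ and indices $j_m \in H(u_m)$ witnessing the failure. The first step is to rescale: set
$$
v_m(x,t) := \frac{u_m\big(2^{-j_m} x, 4^{-j_m} t\big)}{S(2^{-j_m-1}, u_m)},
$$
so that $v_m$ is defined on $Q_{2^{j_m}}$ (which exhausts $\mathbb{R}^{n+1}_-$ if $j_m \to \infty$, a case one must handle), $v_m(0,0)=0$, $v_m \ge 0$, $\partial_t v_m \ge 0$, and — crucially — $0 \le \mathcal{L} v_m - \partial_t v_m \le c_1 \, 4^{-j_m} / S(2^{-j_m-1}, u_m) \le c_1 / (C_1^{(m)} \cdot 4) \to 0$. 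The normalization gives $\sup_{Q^-_{1/2}} v_m = 1$, while the membership $j_m \in H(u_m)$ gives $\sup_{Q^-_1} v_m \le M$, so the $v_m$ are uniformly bounded on $Q^-_1$.

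The second step is to extract a limit. By the interior Lipschitz regularity available for this operator (Corollary \ref{c4.1}, or more directly the regularity theory of \cite{Does,Silv} applied to $v_m$ with right-hand side tending to $0$ in $L^\infty$) the $v_m$ are locally uniformly Lipschitz in $(1,1/2)$ on compact subsets of $Q^-_1$, hence, up to a subsequence, converge locally uniformly to some $v_\infty$. By the stability of viscosity solutions (Proposition \ref{Prop2}), the limit satisfies $\mathcal{L} v_\infty - \partial_t v_\infty = 0$ wherever $v_\infty > 0$ — in fact one gets $0 \le \mathcal{L} v_\infty - \partial_t v_\infty \le 0$ globally in the viscosity sense, since the right-hand sides converge to $0$ — and $v_\infty$ inherits $v_\infty \ge 0$, $v_\infty(0,0)=0$, $\partial_t v_\infty \ge 0$, and $\sup_{\overline{Q^-_{1/2}}} v_\infty = 1$. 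The third step is to derive the contradiction: $v_\infty$ is a nonnegative viscosity solution of $\mathcal{L} v_\infty - \partial_t v_\infty = 0$ attaining the value $0$ at $(0,0)$; since $v_\infty \ge 0$ the point $(0,0)$ is an interior minimum, and a strong minimum principle (or the comparison principle, Proposition \ref{Prop1}, against $v \equiv 0$ together with $\partial_t v_\infty \ge 0$) forces $v_\infty \equiv 0$ on a backward cylinder touching $(0,0)$, hence $v_\infty(0,0)$ cannot be a "free boundary" point with positive growth — more precisely, one applies Lemma \ref{l6.1} to conclude $\sup_{\partial_p Q^-_{1/2}} v_\infty \ge \mu_0 (1/2)^2 > 0$ only if $(0,0) \in \overline{\{v_\infty > 0\}}$; if instead $v_\infty \equiv 0$ near the origin this contradicts $\sup_{Q^-_{1/2}} v_\infty = 1$ once one tracks that the supremum-$1$ normalization propagates. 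The cleanest route: the normalization forces $\sup_{\overline{Q^-_{1/2}}} v_\infty = 1 > 0$, so $\{v_\infty > 0\} \ne \emptyset$; but then, using $\partial_t v_\infty \ge 0$ and the equation, $v_\infty$ would have to be positive on a set reaching back toward the parabolic boundary, and a Harnack/barrier argument pins $v_\infty(0,0) > 0$, contradicting $v_\infty(0,0) = 0$.

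I expect the main obstacle to be making the last step rigorous and quantitatively clean — specifically, ruling out the degenerate limit $v_\infty \equiv 0$, which is the scenario in which the contradiction is not immediate. The normalization $\sup_{Q^-_{1/2}} v_m = 1$ should survive in the limit provided the uniform Lipschitz bound prevents the supremum from "escaping" across the subsequence, which is why the regularity estimate must be invoked up to the boundary of $Q^-_{1/2}$ (or one works on $\overline{Q^-_{1/2}}$ from the start); one must also handle the case $j_m$ bounded separately (there it is a finite family and the estimate is trivial by compactness of $\Theta$ in the relevant topology and the fact that $S(2^{-j-1},u) \le 1$). A secondary technical point is checking that the rescaled right-hand sides $c_1 4^{-j_m}/S(2^{-j_m-1},u_m)$ genuinely tend to zero, which is exactly what the contradiction hypothesis $S(2^{-j_m-1},u_m) > C_1^{(m)} 4^{-j_m}$ delivers — this is the crux of why the argument closes.
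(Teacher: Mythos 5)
Your proposal follows essentially the same route as the paper: argue by contradiction, rescale $v_k(x,t)=u_k(2^{-j_k}x,2^{-2j_k}t)/S(2^{-j_k-1},u_k)$ so that the contradiction hypothesis makes the rescaled right-hand side of order $c_1/k\to 0$ while membership in $H(u_k)$ keeps $v_k$ uniformly bounded on $Q^-_1$ and the normalization gives $\sup_{Q^-_{1/2}}v_k=1$; then use the uniform interior $C^{1+\alpha,\frac{1+\alpha}{2}}$ estimates (the paper invokes the Attouchi--Parviainen regularity for the normalized $p$-parabolic equation rather than Corollary \ref{c4.1}, which concerns the $\varepsilon$-problem) together with stability to pass to a limit $v$ solving $\mathcal{L}v-\partial_t v=0$ with $v\ge 0$, $v(0,0)=0$, $\partial_t v\ge 0$ and $\sup_{\overline{Q^-_{1/2}}}v=1$.

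The only place your plan wobbles is the closing step, which you misdiagnose: the degenerate limit $v\equiv 0$ is not an obstacle to be ruled out, it is exactly the conclusion the strong minimum principle delivers (a nonnegative solution of the homogeneous equation vanishing at the interior, final-time point $(0,0)$ vanishes in the whole backward cylinder $Q^-_{3/4}$), and that is already the contradiction, because the normalization $\sup_{\overline{Q^-_{1/2}}}v=1$ survives the uniform convergence on the compact set $\overline{Q^-_{1/2}}$ --- which is precisely why the compactness estimate is taken on the larger cylinder $\overline{Q^-_{3/4}}$, as you yourself anticipate. In particular, the detour through Lemma \ref{l6.1} applied to the limit does not work (its nondegeneracy constant $\mu_0$ requires a right-hand side bounded below by $c_0>0$, whereas the limit equation is homogeneous), and the ``Harnack/barrier'' argument pinning $v(0,0)>0$ is only a vaguer rephrasing of the same minimum principle; dropping both, your argument closes exactly as in the paper.
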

\begin{proof}
First, note that $H(u)\not=\emptyset$ because $0\in H(u)$. Indeed, using Lemma \ref{l6.1}, we have
$$
S(1,u)\leq1=4\left(\frac{1}{\mu_0}\right)\mu_02^{-2}\leq4\left(\frac{1}{\mu_0}\right)S(2^{-1},u)\leq MS(2^{-1},u).
$$
Next, suppose the conclusion of the lemma fails. Then, for every $k \in \mathbb{N}$, there is $u_k \in \Theta$ and $j_k \in H(u_k)$ such that
$$
S(2^{-j_k-1},u_k) \ge k 2^{-2j_k}.
$$
Define $v_k:Q_1 \rightarrow \mathbb{R}$ by
$$
v_k(x,t):= \frac{u_k(2^{-j_k}x, 2^{-2j_k}t)}{S(2^{-j_k-1},u_k)}.
$$
One easily verifies that
$$0 \le v_k \le 1 \quad \textrm{in} \ Q^{-}_1; \qquad 0 \le \mathcal{L} v_k-\partial_t v_k \le \frac{c_1}{k}; $$
$$\sup_{Q^{-}_{1/2}}v_k =1;\qquad v_k(0,0)=0; \qquad\partial_t v_k \ge 0 \quad \textrm{in} \ Q_1^-.$$
Therefore, from \cite[Theorem 1.1]{AM} we deduce that $v_k \in C^{1+\alpha, \frac{1+\alpha}{2}}_{\textrm{loc}}(Q^{-}_1)$ is uniformly bounded,  independently  of $k$, for a constant $\alpha \in (0,1)$. It follows then from Arzel\'{a}-Ascoli theorem that there exists a subsequence, still denoted $v_k$, and a function $v \in C^{1+\alpha,\frac{ 1+\alpha}{2}}(\overline{Q^{-}_{3/4}}) $ such that $v_k \to v$ and $\nabla v_k \to \nabla v$ uniformly in $\overline{Q^{-}_{3/4}}$. Moreover, 
\begin{equation} \label{cont}
	\sup_{Q^{-}_{1/2}} v =1
\end{equation}
and using compactness arguments (see \cite[Theorem 2.10]{Silv})
$$
\partial_t v-\mathcal{L} v=0 \quad \textrm{in} \quad Q^{-}_{3/4} , \quad v(0,0)=0, \quad v \ge 0, \quad \partial_t v \ge 0
$$
in $Q^{-}_{3/4}$.  Therefore, by the maximum principle (see \cite{Arg}), we obtain $v \equiv 0$. This gives us a contradiction to \eqref{cont}, if we choose $k \gg 1$.
\end{proof}

\begin{proof}[Proof of Theorem \ref{l6.3}]
Using Lemma \ref{l6.2},  it follows exactly as in \cite[Lemma 6.3]{RMR} that
\begin{equation}\label{6.1}
S(2^{-j},u)\leq 4C_12^{-2j},\,\,\,\forall j\in\mathbb{N}.
\end{equation}
Now, fix $r \in (0,1)$, by choosing $j \in \mathbb{N}$ such that $2^{-j-1} \le r \le 2^{-j} $, one has 
$$
	\sup_{Q^{-}_{r}(0,0)} \, u \le \sup_{Q^{-}_{2^{-j}}} \, u \le 4C_1 2^{-2j} = 16C_12^{-2j-2} \le 16C_1 r^2,
$$
i.e.,
\begin{equation} \label{6.2}
	S(r,u) \le 16C_1 r^2.
\end{equation}

To obtain a similar estimate for $u$ over the whole cylinder (and not only over its lower half) we use a barrier from above. Set
$$
\omega(x,t):= A_1|x|^2 + A_2t,
$$
where $A_2 = \frac{2(n+p-2)}{p} A_1$ and $A_1 >0$. Then in $Q^{+}_{1} = B_{1}(0) \times (0,1)$ one gets  that
\begin{eqnarray*}
	\mathcal{L}\omega - \partial_t \omega &=& \frac{2A_1n}{p} + 2A_1\frac{p-2}{p} -A_2\\
	& =&2\left(\frac{n}{p} + \frac{p-2}{p}\right)A_1 - A_2 = 0 \\
	&\le& \mathcal{L} u - \partial_t u.
\end{eqnarray*}
If $A_1$ is large enough, then $\omega \ge u$ on $\partial_p Q^{+}_{1}$, where for the estimate on $\{t=0\}$ we used $S(r,u)\le 16C_1r^2$ from \eqref{6.2}. Hence, by the comparison principle (see \cite[Theorem 2.9]{Silv} ) one has $\omega \ge u$ in $Q^{+}_{1}$. Therefore
$$
\sup_{Q_r(0,0)} u \le C_0 r^{2},
$$
for a constant $C_0>0$.
\end{proof}

\begin{proof}[Proof of Theorem \ref{t6.1}]
Using Theorem \ref{l6.3}, the proof follows exactly as the one of \cite{RMR}.
\end{proof}


\begin{thebibliography}{99}


\bibitem{Arg} A. Argiolas, F. Charro, I. Peral, \textit{On the Aleksandrov-Bakel'man-Pucci estimate for some Elliptic and Parabolic Nonlinear Operators }. Archive for Rational Mechanics and Analysis; 202(3) (2011); 875-917. 

\bibitem{AM} A. Attouchi, M. Parviainem, \textit{H\"{o}lder regularity for the gradient of the inhomogeneous parabolic normalized $p$-Laplacian}. Communications in Contemporary Mathematics 20(4) (2017)

 \bibitem{BGG}  A. Banerjee, N. Garofalo, \textit{Gradient bounds and monotonicity of the energy for some nonlinear singular diffusion equations}, Indiana Univ. Math. J. 62 (2) (2013) 699?736.



\bibitem{Caf1}  L. Caffarelli,  \textit{Uniform Lipschitz regularity of a singular perturbation problem}, Differ. Integral Equ. 8 (1995), 1585-1590.

\bibitem{Caf2} L. Caffarelli, C.  Lederman, N. Wolanski, \textit{Uniform estimates and limits for a two phase parabolic singular perturbation problem}, Indiana Univ. Math. J. 46 (1997), 453-489.

\bibitem{Caf3} L. Caffarelli ,C. Lederman, N.  Wolanski, \textit{Pointwise and viscosity solutions for the limit of a two phase parabolic singular perturbation problem}, Indiana Univ. Math. J. 46 (1997), 719-740.

\bibitem{Caf4} L. Caffarelli,  J.L. Vázquez,  \textit{A free boundary problem for the heat equation arising in flame propagation}, Trans. Amer. Math. Soc. 347 (1995), 411-441.

\bibitem{Davi} F. Charro , G. De Phillippis, A. Di Castro, A. M\'{a}ximo,  \textit{On the Aleksandrov-Bakelman-Pucci estimate for the infinity Laplacian}. Calculus of Variations. November 2013, Volume 48, Issue 3Ð4, pp 667Ð693.


\bibitem{CIL92}  M.G. Crandall, H. Ishii, and P.L. Lions, \textit{User's guide to viscosity solutions of second order partial differential equations}, Bull. Amer. Math. Soc. 27 (1992), 1-67.


\bibitem{16}  L.C. Evans, R.F. Gariepy, \textit{Measure Theory and Fine Properties of Functions},CRC Press, Boca Raton, Ann Arbor and London, 1992.






\bibitem{Does} D. Kerstin , \textit{An evolution equation involving the normalized $p$-Laplacian}. Communications on pure and applied analysis. Vol. 10, Number 1, January 2011.

 \bibitem{O}  O.A. Ladyzenskaja,   V.A. Solonnikov,  N. Uralceva,\textit{Linear and quasilinear equations of parabolic type}. Translated from the Russian by S. Smith. Translations of Mathematical Monographs, Vol. 23. American Mathematical Society, Providence, R.I., 1968. 




\bibitem{MT}  M. Montenegro, E. Teixeira,  \textit{Gradient estimates for viscosity solutions of singular fully nonlinear elliptic equations}, J. Funct. Anal. 259 (2010), 428-452.

\bibitem{Pires} Y. Peres, O. Schramm, S. Sheffield, D. Wilson,  \textit{Tug-of-war and the infinity
Laplacian}. J. Amer. Math. Soc., 22(1):167-210, 2009.

\bibitem{RT11}  G.C. Ricarte, E. Teixeira, \textit{Fully nonlinear singularly perturbed equations and asymptotic free boundaries}, J. Funct. Anal. 261 (2011), 1624-1673.\label{RT11}

\bibitem{RMR}  G.C. Ricarte, R. Teymurazyan,J.M. Urbano, \textit{Singularity perturbed fully nonlinear parabolic problems and their asymptotic free boundaries}. https://arxiv.org/pdf/1604.01294.pdf

\bibitem{Silv} L. Silvestre, J. Tianling,  \textit{H\"{o}lder gradient estimates for parabolic homogeneous $p$-Laplacian equations}. Journal de Math\'{e}matiques Pures et Appliqu\'{e}es. Volume 108, Issue 1, July 2017, Pages 63Ð87.

\bibitem{Sha}  H. Shahgholian,  \textit{Analysis of the free boundary for the $p$-parabolic variational problem ($p \ge 2$)}, Rev. Mat. Iberoamericana 19 (2003), 797-812.


\bibitem{Wang}  L. Wang, \textit{On the regularity theory of fully nonlinear parabolic equations: I}, Comm. Pure Appl. Math. 45 (1992), 27-76.

\bibitem{Wang2}  L. Wang, \textit{On the regularity theory of fully nonlinear parabolic equations: II}, Comm. Pure Appl. Math. 45 (1992), 141-178.

\bibitem{Wang3}  L. Wang , \textit{On the regularity theory of fully nonlinear parabolic equations: III}, Comm. Pure Appl. Math. 45 (1992), 255-262.



\end{thebibliography}
\end{document}